\newtheorem{theorem}{Theorem}[section]
\newtheorem{lemma}[theorem]{Lemma}
\newtheorem{prop}[theorem]{Proposition}
\newtheorem{coro}[theorem]{Corollary}
\theoremstyle{definition}
\newtheorem{definition}[theorem]{Definition}
\newcommand{\Ric}{{\rm Ric}}
\newcommand{\PSH}{{\rm PSH}}
\newcommand{\Id}{\operatorname{Id}}
\def\Xint#1{\mathchoice
{\XXint\displaystyle\textstyle{#1}}%
{\XXint\textstyle\scriptstyle{#1}}%
{\XXint\scriptstyle\scriptscriptstyle{#1}}%
{\XXint\scriptscriptstyle\scriptscriptstyle{#1}}%
\!\int}
\def\XXint#1#2#3{{\setbox0=\hbox{$#1{#2#3}{\int}$ }
\vcenter{\hbox{$#2#3$ }}\kern-.6\wd0}}
\def\dashint{\Xint-}
\newcommand\fint{\dashint}}
\title{A Wess--Zumino--Witten type equation in the space of K\"ahler potentials in terms of Hermitian--Yang--Mills metrics}
\author{Kuang-Ru Wu\thanks{Research partially supported by NSF grant DMS-1764167.}}
\date{}
\begin{document}
\maketitle
\begin{abstract}
    We prove that the solution of a Wess--Zumino--Witten type equation from a domain $D$ in $\mathbb{C}^m$ to the space of K\"ahler potentials can be approximated uniformly by Hermitian--Yang--Mills metrics on certain vector bundles. The key is a new version of Berndtsson's theorem on the positivity of direct image bundles.
\end{abstract}
\section{Introduction}

Let $L$ be a positive line bundle over a compact complex manifold $X$ of dimension $n$, and $h$ a positively curved metric on $L$ with curvature $\omega$. The space of K\"ahler potentials is $\mathcal H_{\omega}=\{\phi\in C^\infty(X,\mathbb{R}): \omega +i\partial\Bar{\partial }\phi>0 \}$, and for a positive integer $k$ we denote by $\mathcal H_k$ the space of inner products on $H^0(X,L^k)$. Starting from a question asked by Yau \cite{Yau87} and the work of Tian \cite{Tia90}, Zelditch \cite{Zel98}, Catlin \cite{Cat99}, and many others, it is well-known that a given K\"ahler potential $\phi\in \mathcal{H}_\omega$ can be approximated by $\phi_k\in \mathcal{H}_\omega$ associated with $\mathcal H_k$ as $k\to \infty$. Furthermore, Mabuchi \cite{Ma87}, Semmes \cite{Sem92}, and Donaldson \cite{Don99} discovered that $\mathcal H_\omega$ carries a Riemannian metric which allows one to talk about geometry, especially geodesics, of $\mathcal H_\omega$. Thanks to Phong--Sturm \cite{PS06}, Berndtsson \cite{Bern13}, and Darvas--Lu--Rubinstein \cite{DLR18}, geodesics in $\mathcal H_\omega$ can be approximated by geodesics in $\mathcal H_k$ as $k\to \infty$. More generally, one may wonder if harmonic maps into $\mathcal H_\omega$ can also be approximated by harmonic maps associated with $\mathcal{H}_k$. A version of this was confirmed by Rubinstein--Zelditch \cite{RZ10} when $X$ is toric, and the maps take values in toric K\"ahler metrics (see also \cite{SZ07} and \cite{SZ10}).      

In this article, we focus on a Wess--Zumino--Witten (WZW) type equation for a map from $D\subset \mathbb{C}^m$ to $\mathcal H_\omega$, and we show that the solution to such an equation can be approximated by Hermitian--Yang--Mills metrics on certain direct image bundles. We will also see how this result recovers some of those mentioned in the first paragraph. 

We first explain how to derive this WZW equation. Recall that the tangent space $T_{\phi}\mathcal H_\omega$ at $\phi\in \mathcal H_\omega$ can be canonically identified with $C^\infty (X,\mathbb{R})$, and following Mabuchi \cite{Ma87}, Semmes \cite{Sem92}, and Donaldson \cite{Don99}, the Mabuchi metric $g_M$ on $\mathcal H_\omega$ is $$g_M(\xi,\eta)=\int_X \xi \eta\omega^n_\phi, \textup{ for }\phi\in \mathcal H_\omega \textup{ and } \xi,\eta \in T_\phi\mathcal H_\omega .$$
Let $D$ be a bounded smooth strongly pseudoconvex domain in $\mathbb{C}^m$. A map $\Phi:D\to \mathcal H_\omega$ will be identified as $\Phi: D\times X \to \mathbb{R}$ with $\Phi(z,\cdot)\in \mathcal H_\omega$ for $z\in D$. A map $\Phi:D\to \mathcal H_\omega$ is said to be harmonic if it is a critical point of the functional $E(\Phi)=\int_D |\Phi_*|^2 dV$ where $dV$ is the Euclidean volume form on $D$, $\Phi_*$ is the differential of $\Phi$, and $|\Phi_*|$ is the Hilbert--Schmidt norm of $\Phi_*$, measured by Mabuchi metric $g_M$ and the Euclidean metric of $D$. A straightforward computation gives the harmonic map equation: 
\begin{equation}\label{har}
\sum^m_{j=1}|\nabla\Phi_{z_j}|^2-2\Phi_{z_j\bar{z}_j}=0,    
\end{equation}
where $\{z_j\}$ are coordinates on $D$ and $|\nabla \Phi_{z_j}(z)|^2$ is computed using the metric $\omega_{\Phi(z)}$. On the other hand, there is a perturbed functional $\mathscr{E}$, whose Euler--Lagrange equation is also of interest. The construction of this perturbed functional is similar to that of \cite[Section 5]{Don99} (see also \cite{Wit83}), who dealt with one dimensional $D$. In order to define $\mathscr{E}$, we first define a three-form $\theta$ on $\mathcal{H}_\omega$: for $\phi\in \mathcal H_\omega$ and $ \xi_1,\xi_2,\xi_3 \in T_\phi\mathcal H_\omega$, 
\begin{equation}\label{three form}
  \theta(\xi_1,\xi_2,\xi_3):=g_M(\{\xi_1,\xi_2\}_{\omega_\phi},\xi_3)=\int_X\{\xi_1,\xi_2\}_{\omega_\phi}\xi_3 \omega_\phi^n,  
\end{equation}
where $\{\cdot ,\cdot \}_{\omega_\phi}$ is the Poisson bracket determined by the symplectic form $\omega_\phi$. This three-form $\theta$ is $d$-closed (see Lemma \ref{theta is closed} below), and therefore there is a two-form $\alpha$ on $\mathcal H_\omega$ such that $d\alpha=\theta$. For a map $\Phi: D\to \mathcal{H}_\omega$, we define $\mathscr{E}(\Phi):=E(\Phi)+4 i \sum_j\int_D\alpha(\Phi_{\bar{z}_j},\Phi_{{z_j}})dV$. We will show in Lemma \ref{EL for WZW}, the Euler--Lagrange equation of $\mathscr{E}$ is
\begin{equation}\label{WZW}
  \sum^m_{j=1}|\nabla\Phi_{z_j}|^2-2\Phi_{z_j\bar{z}_j}+i\{\Phi_{\bar{z}_j},\Phi_{{z_j}}\}_{\omega_\Phi}=0.  
\end{equation} 
In view of its connection with \cite{Wit83} and following \cite{Don99}, we call the equation (\ref{WZW}) WZW equation for a map $\Phi: D\to \mathcal{H}_\omega$.

Donaldson showed in \cite{Don99}, when $m=1$, that the WZW equation is equivalent to a homogeneous complex Monge--Amp\`ere equation. We have the following extended equivalence for $m\geq 1$ by a similar computation. Let $\pi:D\times X \to X$ be the projection onto $X$. Then the extended equivalence is 
\begin{equation}\label{HCMA WZW}
    \Phi \textup{ solves (\ref{WZW})} \textup{ if and only if }  (i\partial \bar{\partial}\Phi+\pi^*\omega)^{n+1}\wedge(i\sum_{j=1}^m dz_j\wedge d\bar{z}_j)^{m-1}=0.
\end{equation}
This suggests that the proper generality of the WZW equation is for maps from a K\"ahler manifold $D$ to $\mathcal H_\omega$. Nevertheless, in this paper we restrict to $D\subset \mathbb{C}^m$. 

The next step is to approximate the solution of the WZW equation, and we will use interpolation of Perron families to construct approximants. We first introduce the following definition
\begin{definition}
 We will say that a function $u:D\times X\to [-\infty, \infty)$ is $\omega$-subharmonic on graphs if, for any holomorphic map $f$ from an open subset of $D$ to $X$, $\psi(f(z))+u(z,f(z))$ is subharmonic, where $\psi$ is a local potential of $\omega$.
\end{definition}
This definition does not depend on the choice of $\psi$ since any two local potentials differ by a pluriharmonic function. (This definition has its origin in the works of Slodkowski \cite{Slo88},\cite{Slo90a},\cite{Slo90b}, and Coifman and Semmes \cite{CS93}; however, they focus on functions with values norms or quasi-norms, whereas we consider general functions. There is also a notion of $k$-subharmonicity, see \cite{Blo05}, but it is not equivalent to subharmonicity on graphs.)

Let $v$ be a real-valued smooth function on $\partial D\times X$ and $\partial D \ni z \mapsto v(z,\cdot)=v_z\in \mathcal{H}_{\omega}$. We simply write $v \in C^{\infty}(\partial D,\mathcal{H}_{\omega})$. Consider the Perron family 
\begin{align*}
G_v:=\{u \in\textup{usc($D\times X$)}:& \textup{ $u$ is $\omega$-subharmonic on graphs}, \textup{and }\limsup_{D \ni z \to \zeta\in \partial D}u(z,x)  \leq v(\zeta,x)\}.
\end{align*}
As we will later see, the upper envelope $V=\sup\{u: u \in G_v\}$ is a weak solution of the WZW equation from $D$ to $\mathcal H_\omega$. The above set up is for $\mathcal{H}_\omega$. As for $\mathcal{H}_k$, we recall first the two maps that connect $\mathcal H_\omega$ and $\mathcal H_k$. The Hilbert map $H_k: \mathcal H_{\omega}\to \mathcal H_k$ is $$H_k(\phi)(s,s)=\int_X h^k(s,s)e^{-k\phi}\omega^n, \textup{ for $\phi\in \mathcal{H}_\omega$ and $s\in H^0(X,L^k)$}.$$ In the other direction, the Fubini--Study map $FS_k: \mathcal H_k\to \mathcal H_{\omega}$ is
\begin{equation*}
FS_k(G)(x)=\frac{1}{k}\log \sup_{s\in H^0(X,L^k),G(s,s)\leq 1}h^k(s,s)(x), \textup{ for $G\in \mathcal H_k$ and } x\in X.
\end{equation*}
Now following the definitions from \cite{CS93}, let $\mathcal N_k^*$ be the set of norms on $H^0(X,L^k)^*$, then a norm function $D\ni z\mapsto U_z\in \mathcal N_k^*$ is said to be subharmonic if $\log U_z(f(z))$ is subharmonic for any holomorphic section $f:W\subset D\to H^0(X,L^k)^*$. The second Perron family we consider is  
\begin{align*}
  G^{k}_v:=\{&D \ni z \to U_z \in \mathcal N_k^* \textup{ is subharmonic and }\\ 
  &\limsup_{D\ni z \to \zeta \in \partial D} U^2_z(s) \leq H_k^*(v_\zeta)(s,s) \textup{ for any $s\in H^0(X,L^k)^*$ }  \},  
\end{align*}
where $H_k^*(v)$ is the inner product dual to $H_k(v)$. A remarkable fact about the upper envelope $V^{k}=\sup \{U:U\in G^{k}_v\}$ is a theorem of Coifman and Semmes \cite{CS93}, which shows that $V^{k}$ is not only a norm but an inner product (see \cite[Corollary 2.7]{Slo90a} for a different proof.); moreover it solves the Hermitian--Yang--Mills equation:
\begin{equation*}
\begin{cases}
\Lambda \Theta(V^{k})=0\\
V^{k}|_{\partial D}=H^*_k(v).
\end{cases}    
\end{equation*} 
Here we view $V^{k}$ as a Hermitian metric on the bundle $\overline{D}\times H^0(X,L^k)^*\to \overline{D}$, and $\Theta(V^{k})$ is its curvature, while $\Lambda$ is the trace with respect to the Euclidean metric of $D$, so in general $\Lambda \Theta(V^{k})$ takes values in endomorphisms of $H^0(X,L^k)^*$. Denoting the dual metric by $(V^k)^*$, our main result is that the upper envelope $V$ of $G_v$ is the limit of Hermitian--Yang--Mills metrics:
\begin{theorem}\label{thm:YM approx}
$FS_k((V^{k})^*)$ converges to $V$ uniformly on $D \times X$, as $k\to \infty$.
\end{theorem}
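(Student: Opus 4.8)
The plan is to prove the two one‑sided estimates $FS_k((V^k)^*)\le V+o(1)$ and $FS_k((V^k)^*)\ge V-o(1)$ separately, each uniformly on $D\times X$. Two soft facts are used throughout. First, both $H_k$ and $FS_k$ are order‑reversing (for $H_k$ because $\phi\mapsto e^{-k\phi}$ is; for $FS_k$ because enlarging an inner product shrinks its unit ball and hence the sup defining $FS_k$), so in particular $FS_k$ applied to an inequality between inner products flips it. Second, $FS_k\circ H_k\to\Id$ uniformly on any $C^0$‑compact family of $\omega$‑plurisubharmonic potentials: writing $e^{k FS_k(H_k(\phi))}=B^\phi_k$ for the Bergman function of the weight $h^ke^{-k\phi}$ against $\omega^n$, the bound $B^\phi_k\le Ck^n$ follows from the sub‑mean‑value inequality for $|s|^2_{h^k}$ and $B^\phi_k\ge ck^n$ from the Ohsawa--Takegoshi extension theorem, both with constants uniform over the family $\{V(z,\cdot):z\in\overline D\}$ (which is $C^0$‑compact and has $\omega+i\partial\bar\partial_x V(z,\cdot)\ge0$). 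This is the Tian--Zelditch--Catlin asymptotics, and I will also use that the envelope $V$, being the Perron solution with smooth data $v$ on the strongly pseudoconvex $D$, is continuous up to $\partial D$ with $V=v$ on $\partial D\times X$ (via sub/super barriers built from the strong pseudoconvexity), and similarly that $V^k$ is continuous up to $\partial D$ with $V^k|_{\partial D}=H_k^*(v)$, as in the cited theorem of Coifman and Semmes.

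For the upper bound I would first check that $FS_k$ turns a subharmonic norm family into a function that is $\omega$‑subharmonic on graphs: in a local frame of $L^k$ with weight $e^{-k\psi}$, and for a holomorphic map $f$ from an open subset of $D$ into $X$, a direct computation gives $\psi(f(z))+FS_k((V^k_z)^*)(f(z))=\tfrac1k\log V^k_z(\ell_{f(z)},\ell_{f(z)})$, where $\ell_{f(z)}\in H^0(X,L^k)^*$ is the functional ``read off the frame coefficient at $f(z)$,'' which is holomorphic in $z$; subharmonicity of $z\mapsto\log V^k_z(\ell_{f(z)})^2$ is exactly the defining property of a subharmonic norm family. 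Next, continuity of $V^k$ up to $\partial D$ gives $(V^k_z)^*\to H_k(v_\zeta)$ as $z\to\zeta$, so by continuity (or order‑reversal) of $FS_k$ and the asymptotics above, $\lim_{z\to\zeta}FS_k((V^k_z)^*)=FS_k(H_k(v_\zeta))\le v_\zeta+\epsilon_k$ with $\epsilon_k:=\|FS_k(H_k(v))-v\|_{C^0(\partial D\times X)}\to0$. Hence $FS_k((V^k)^*)-\epsilon_k$ is a continuous, $\omega$‑subharmonic‑on‑graphs function with boundary $\limsup\le v$, i.e.\ a member of $G_v$, so $FS_k((V^k)^*)-\epsilon_k\le V$.

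For the lower bound I would run the dual construction, and this is where the main obstacle lies — precisely the Berndtsson‑type positivity the abstract advertises. One needs: \emph{if $u$ is bounded and $\omega$‑subharmonic on graphs on $D\times X$ with $\omega$‑psh slices, then $z\mapsto H_k(u(z,\cdot))$ is a ``superharmonic'' family of inner products on $H^0(X,L^k)$, equivalently $z\mapsto\sqrt{H_k(u(z,\cdot))^*}$ is a subharmonic norm family on $H^0(X,L^k)^*$.} This is weaker than the classical direct‑image theorem (which needs full plurisubharmonicity of the weight and yields Nakano positivity): only the ``trace/subharmonic'' part of the curvature positivity is asserted, matching the weaker hypothesis, and establishing it is the heart of the matter — I would expect a $\bar\partial$‑type curvature computation in the spirit of Berndtsson, exploiting that along every holomorphic graph the weight is subharmonic in $z$. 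Granting this, apply it to $u=V$: then $U_z:=\sqrt{H_k(V(z,\cdot))^*}$ is a subharmonic norm family, and since $V(z,\cdot)\to v_\zeta$ uniformly as $z\to\zeta$, dominated convergence gives $H_k(V(z,\cdot))\to H_k(v_\zeta)$, hence $U_z^2\to H_k^*(v_\zeta)$. Thus $U\in G^k_v$, so $V^k_z\ge U_z$, i.e.\ $(V^k_z)^*\le H_k(V(z,\cdot))$ as inner products on $H^0(X,L^k)$. Applying the order‑reversing $FS_k$ and the asymptotics, $FS_k((V^k_z)^*)\ge FS_k(H_k(V(z,\cdot)))\ge V(z,\cdot)-\epsilon_k'$ with $\epsilon_k':=\sup_{z\in\overline D}\|FS_k(H_k(V(z,\cdot)))-V(z,\cdot)\|_{C^0(X)}\to0$.

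Combining the two bounds yields $\|FS_k((V^k)^*)-V\|_{C^0(D\times X)}\le\max(\epsilon_k,\epsilon_k')\to0$. Apart from the direct‑image positivity, the points needing care are the boundary regularity of $V$ (used to transfer the Dirichlet data cleanly on both sides) and the uniformity of the Bergman asymptotics over $\{V(z,\cdot)\}_{z\in\overline D}$, which in turn may rely on whatever a priori regularity of $V$ is extracted from the weak WZW equation \eqref{HCMA WZW}; none of these seems as serious as the new Berndtsson‑type statement, which I expect to be the technical core of the paper.
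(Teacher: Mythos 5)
Your overall architecture is the same as the paper's (upper bound: show $FS_k((V^k)^*)$ is $\omega$-subharmonic on graphs via the Coifman--Semmes theorem and the formula $FS_k((V^k_z)^*)=\tfrac{2}{k}\log V^k_z(\hat s^*_k)$, plus Tian--Catlin--Zelditch at the boundary, so it nearly lies in $G_v$; lower bound: push competitors through $H_k^*$ into $G^k_v$ and use Ohsawa--Takegoshi). But the lower bound as you wrote it has a genuine gap, and it is exactly at the point you chose to ``grant.'' The positivity statement you need --- \emph{$u$ bounded, usc and exactly $\omega$-subharmonic on graphs $\Rightarrow$ $H_k^*(u)$ is a subharmonic norm function for all large $k$} --- is stronger than what the paper's direct-image theorem gives. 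Theorem \ref{thm bo bundle} is a statement about weights on $E\otimes K_X$, and to apply it to $H_k$ one writes $L^k=(L^k\otimes K_X^*)\otimes K_X$, so one must check that $ku$ is $(k\omega+\Ric\,\omega)$-subharmonic on graphs; since $\Ric\,\omega$ can be negative in some directions, this is \emph{not} implied by $u$ being $\omega$-subharmonic on graphs. That is why Proposition \ref{prop} assumes $(1-\varepsilon)\omega$-subharmonicity on graphs, with $k_0=k_0(\varepsilon)$, and why the paper does not apply it to elements of $G_v$ at all: it replaces the boundary data by $v^\delta=P(\delta v)$, works with $u\in G_{v^\delta}$, applies Proposition \ref{prop} to $u/\delta$ (which is $\tfrac1\delta\omega$-subharmonic on graphs), uses the strict margin $\omega+i\ddbar(u/\delta)\ge(1-\tfrac1\delta)\omega$ for the Ohsawa--Takegoshi step, and then closes the argument by the comparison $V+(\delta-1)\inf v\le V^\delta$ and a double limit $k\to\infty$, $\delta\to1$. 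Your proposal contains none of this $\delta$-regularization, so even granting the paper's Theorem \ref{thm bo bundle} your lower bound does not follow; you would need the no-loss version, which is precisely what is not proved.

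A second, related problem: you apply the positivity to $u=V$ itself, which presupposes that $V$ is usc and $\omega$-subharmonic on graphs, i.e.\ $V\in G_v$. In the paper this is only Corollary \ref{cor stable}, deduced \emph{from} Theorem \ref{thm:YM approx} (continuity of $V$ is obtained from the uniform convergence), and the author explicitly remarks that a direct proof of $V\in G_v$ is not known; so your lower-bound argument is circular as stated. This is fixable in principle by running the argument for each competitor $u\in G_v$ and taking the supremum (boundary continuity of the envelope, which you also use, is fine --- it is Lemma \ref{envelope}), but only after the $\varepsilon$-slack issue above is addressed, e.g.\ by the paper's $P(\delta v)$ trick. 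Finally, your claim of uniform Bergman asymptotics over the family $\{V(z,\cdot)\}$ with merely semipositive continuous potentials is more delicate than stated; the paper avoids it by invoking the Ohsawa--Takegoshi bound only for weights with the strict lower bound $(1-\tfrac1\delta)\omega$ coming from the same $\delta$-trick.
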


Now we turn to the interpretation of the upper envelope $V$ and its relation with the WZW equation. 
The next theorem shows that $V$ solves the WZW equation under a regularity assumption.
\begin{theorem}\label{thm har}
If the upper envelope $V$ of $G_v$ is in $C^2(D\times X)$, then $$(i\partial \bar{\partial}V+\pi^*\omega)^{n+1}\wedge(i\sum_{j=1}^m dz_j\wedge d\bar{z}_j)^{m-1}=0.$$ 
\end{theorem}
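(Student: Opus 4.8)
The plan is to show that the upper envelope $V$ of the Perron family $G_v$, when it is $C^2$, satisfies the homogeneous complex Monge-Ampère-type equation (\ref{HCMA WZW}) directly, without passing through the WZW equation (\ref{WZW}) per se — the equivalence (\ref{HCMA WZW}) then does the rest. The key mechanism is the interplay between two characterizations of the relevant quantity: being $\omega$-subharmonic on graphs, and the positivity/degeneracy of the $(1,1)$-form $i\partial\bar\partial V + \pi^*\omega$ on $D\times X$.

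First I would establish that $V$ itself belongs to $G_v$ and is $\omega$-subharmonic on graphs; this is the standard Perron-method step (upper semicontinuous regularization of the envelope of a Perron family stays in the family), and it already gives the inequality direction: for any holomorphic $f$ from an open subset of $D$ to $X$, the function $z\mapsto \psi(f(z))+V(z,f(z))$ is subharmonic, which by choosing $f$ appropriately shows that $i\partial\bar\partial V + \pi^*\omega$ restricted to any such graph is nonnegative. Assuming $V\in C^2$, one translates this into the statement that the $(1,1)$-form $\Omega:=i\partial\bar\partial V+\pi^*\omega$ is semipositive on $D\times X$ (it is $\geq 0$ along graphs of holomorphic maps, and combined with strict positivity of $\omega$ in the $X$-directions one gets $\Omega\geq 0$ globally, though possibly degenerate). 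Then $\Omega^{n+1}\wedge(i\sum dz_j\wedge d\bar z_j)^{m-1}$ is a nonnegative $(n+m,n+m)$-form, so the claimed equation is equivalent to saying this form vanishes identically, i.e. $\Omega$ has rank at most $n+m-1$ at every point when paired against the flat $(m-1)$-form.

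The heart of the argument is ruling out the strict inequality, and for this I would use the defining maximality of $V$: if at some point $p=(z_0,x_0)$ the form $\Omega^{n+1}\wedge(\text{flat})^{m-1}$ were strictly positive, then $\Omega$ would be strictly positive in $n+1$ complex directions transverse to an $(m-1)$-dimensional flat subspace, and one could locally perturb $V$ upward — add a small bump supported near $p$ — while keeping the perturbed function $\omega$-subharmonic on graphs and below the boundary data, contradicting that $V$ is the supremum of $G_v$. Concretely, near $p$ one writes $V + \vep\chi$ where $\chi$ is a smooth cutoff and $\vep$ small; the condition that this stays $\omega$-subharmonic on graphs is that $i\partial\bar\partial(V+\vep\chi)+\pi^*\omega \geq 0$ along all holomorphic graphs, which holds for small $\vep$ precisely because $\Omega$ has a strictly positive eigenvalue (in the graph-transverse directions) to absorb the perturbation — this is where the strict positivity of $\Omega^{n+1}\wedge(\text{flat})^{m-1}$, rather than merely $\Omega^{n+1}$, is needed, since the perturbation must be allowed to vary in all of the $z$-directions. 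Thus the upper envelope property forces the degeneracy everywhere.

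The main obstacle I anticipate is the bump-construction step: one must verify that adding a localized perturbation to $V$ genuinely produces an element of $G_v$, which requires controlling $\omega$-subharmonicity on graphs simultaneously for \emph{all} holomorphic maps $f$ near the point, not just a single one. The subtlety is that the flat $(m-1)$-directions come from the Euclidean structure of $D$ and are fixed, while the holomorphic graphs are graphs over $D$ in $D\times X$; making the linear-algebra statement ``$\Omega^{n+1}\wedge(i\sum dz_j\wedge d\bar z_j)^{m-1}>0$ at $p$ $\iff$ upward perturbations are possible'' precise amounts to a pointwise computation showing that this wedge product is positive exactly when the Hermitian form $\Omega$, restricted to the quotient of the tangent space by the flat $(m-1)$-plane, is positive definite (of rank $n+1$). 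Granting that linear-algebra fact and the standard gluing lemma for functions subharmonic on graphs (which should follow from the local nature of the definition), the contradiction closes the proof. A secondary, more routine point is handling the $X$-directions at the boundary, but since the claim is an interior statement on $D\times X$ this does not arise.
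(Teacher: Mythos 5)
Your outline coincides with the paper's (show the wedge product is $\geq 0$ because $V$ is an envelope, then rule out strict positivity at a point by an upward bump), but the execution contains a genuine error that undermines both halves. You assert that $\omega$-subharmonicity on graphs of a $C^2$ function forces $\Omega:=i\partial\bar\partial V+\pi^*\omega\geq 0$ on $D\times X$. This is false for $m\geq 2$: the definition only says that $\psi(f(z))+V(z,f(z))$ is \emph{subharmonic} in $z$, i.e.\ the \emph{trace} of $\Omega$ restricted to each graph is nonnegative, not that the restriction is a nonnegative form; taking $f$ constant you only get subharmonicity, not plurisubharmonicity, in the $z$-directions. If $\Omega\geq 0$ did hold, $V$ would belong to the smaller family $F_v$ of $\pi^*\omega$-psh functions and the theorem would reduce to the homogeneous Monge--Amp\`ere case $\Omega^{n+m}=0$; the whole point of $G_v$ versus $F_v$ (Section \ref{compare}) is that these differ when $\dim D>1$. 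As a result, your claim that $\Omega^{n+1}\wedge(i\sum dz_j\wedge d\bar z_j)^{m-1}$ is nonnegative ``because $\Omega\geq 0$'' is unjustified, and the ``linear-algebra fact'' you propose to grant (positivity of the wedge iff $\Omega$ is positive definite on the quotient by the flat $(m-1)$-plane) is not the correct characterization: by (\ref{invariant}) the wedge equals a positive constant times $\sum_j\det(\psi+V)_j$, where $(\psi+V)_j$ is the $(n+1)\times(n+1)$ Hessian in $(z_j,x)$, and individual summands can be negative even when the sum is nonnegative.

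The missing ingredient is precisely the content of Lemma \ref{+ det}: when the fiber block $((\psi+V)_{\mu\bar\lambda})$ is positive, the Schur-complement identity gives
$\sum_j\det(\psi+V)_j/\det((\psi+V)_{\mu\bar\lambda})=\sum_j\bigl(V_{j\bar j}-\sum_{\lambda,\mu}V_{j\bar\lambda}(\psi+V)^{\bar\lambda\mu}V_{\bar j\mu}\bigr)$,
which is exactly the infimum over holomorphic (indeed linear) maps $f$ of $\Delta\bigl(\psi(f(z))+V(z,f(z))\bigr)$. This is what yields both the inequality $\sum_j\det(\psi+V)_j\geq 0$ and the converse needed to verify that the bumped function is still $\omega$-subharmonic on graphs. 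Moreover, before perturbing you must secure strict positivity of $((\psi+V)_{\mu\bar\lambda})$ near the bump point (the envelope only gives $\omega$-plurisubharmonicity in $x$, hence semipositivity); the paper extracts this from $\det(\psi+V)_1>0$ via the auxiliary $\varepsilon|x|^2$ and a Sylvester/Schur-complement argument. You correctly identified the bump step as the main obstacle, but the tool you propose to invoke there is misstated and would not close the argument, so the proposal as written has a real gap at its core.
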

As a result, Theorems \ref{thm:YM approx} and \ref{thm har} together show that the solution of the WZW equation can be approximated by the Hermitian--Yang--Mills metrics. (The equation in Theorem \ref{thm har} is similar to the complex Hessian equation, which has been studied extensively e.g. \cite{Blo05}, \cite{DK14}, \cite{CP19}, and \cite{LN19}, and we hope to return to it in the future).

We mention briefly works related to our result. If $m=1$ and $D\subset \mathbb{C}$ is an annulus, and $v$ is invariant under rotation of the annulus, then Theorems \ref{thm:YM approx} and \ref{thm har} recover the geodesic approximation result of Phong--Sturm \cite{PS06} and Berndtsson \cite{Bern13}. When $X$ is toric, these theorems are reduced to the harmonic approximation of Rubinstein--Zelditch \cite{RZ10}, except that $C^2$ convergence is proved in their paper (see also \cite{SZ07} and \cite{SZ10}). 

The proof of Theorem \ref{thm:YM approx} hinges on Theorem \ref{thm bo bundle}, a result regarding the positivity of direct image bundles. Although Berndtsson's theorem \cite{Bern09} has played a crucial role in approximation theorems similar to Theorem \ref{thm:YM approx} (for example \cite{Bern13}, \cite{BK12}, \cite{DLR18}, and \cite{DW19}), when it comes to approximating by Hermitian--Yang--Mills metrics, a subharmonic analogue of Berndtsson's theorem is desired. It is Theorem \ref{thm bo bundle}, where we prove a version of positivity of direct image bundles for weights that are subharmonic on graphs. This is perhaps the crux of this paper. A corresponding result on Stein manifolds can be proved easily following the proof of Theorem \ref{thm bo bundle}. 

 The WZW equation (\ref{WZW}) is the harmonic map equation (\ref{har}) perturbed with Poisson's bracket, which is closely related to the geometry of $\mathcal{H}_\omega$, an infinite dimensional nonpositively curved manifold. Since the theory of harmonic maps into nonpositively curved manifolds is well developed by Eells--Sampson \cite{ES64}, Hamilton \cite{Ham75}, and many others, a possible future direction is to see if one can combine the classical results with those of this paper to study $\mathcal{H}_\omega$.
  
Before we end this introduction, a few words about the structure of this paper. In section \ref{ sec subharmonic bo}, the subharmonic version of positivity of direct image bundles is proved, except we put off a technical lemma to section \ref{section approx}. Section \ref{sec quan} is devoted to Theorem \ref{thm:YM approx}, and section \ref{sec har} to Theorem \ref{thm har}. In the final section \ref{compare}, we draw parallels with the paper \cite{DW19}.

This paper grew out of joint work \cite{DW19} with Tam\'as Darvas, and I am grateful to him for many useful discussions. I am indebted to L\'aszl\'o Lempert for his critical remarks and suggestions. I would like to thank Chi Li and Jiyuan Han for stimulating conversations.

\section{Positivity of direct image bundles}\label{ sec subharmonic bo}
 
Consider a Hermitian holomorphic line bundle $(E,g)\to X^n$ over a compact complex manifold and assume the curvature $\eta$ of the metric $g$ is positive. We define a variant of the Hilbert map: $\text{Hilb}_{E\otimes K_X}(u)$, for a function $u: D\times X\to \mathbb{R}$, is given by  $$\text{Hilb}_{E\otimes K_X}(u)(s,s)=\int_X g(s,s)e^{-u(z,\cdot)}$$ with $s\in H^0(X,E\otimes K_X)$. In the following, suitable assumptions will be made on $u$ to make sure the integral converges. Then the map $z\mapsto \text{Hilb}_{E\otimes K_X}(u)$ is a Hermitian metric on the bundle $D\times H^0(X,E\otimes K_X)\to D$.
The main result of this section is the following positivity theorem.
\begin{theorem}\label{thm bo bundle} 
If $u$ is bounded and upper semicontinuous (usc) on $D\times X$, and $\eta$-subharmonic on graphs, then the dual metric $\text{Hilb}^*_{E\otimes K_X}(u)$ is a subharmonic norm function.
\end{theorem}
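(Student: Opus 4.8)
The plan is to follow Berndtsson's method for the positivity of direct images, but to notice that once one passes to the \emph{trace} over $\mathbb{C}^m$ of the curvature, the hypothesis ``$\eta$-subharmonic on graphs'' is exactly what is needed; full plurisubharmonicity of the weight on $D\times X$ — which is what Berndtsson's theorem requires — is not. First I would dispose of regularity. Using the approximation lemma deferred to Section~\ref{section approx}, write $u=\lim_\nu u_\nu$ as a decreasing limit of smooth, bounded functions $u_\nu\ge u$ that are still $\eta$-subharmonic on graphs. Since $e^{-u_\nu}\uparrow e^{-u}$, monotone convergence gives $\mathrm{Hilb}_{E\otimes K_X}(u_\nu)\uparrow\mathrm{Hilb}_{E\otimes K_X}(u)$ as Hermitian metrics on $D\times H^0(X,E\otimes K_X)$, so the dual norm functions decrease to $\mathrm{Hilb}^*_{E\otimes K_X}(u)$; and a decreasing limit of subharmonic norm functions is subharmonic, because the sub–mean–value inequality survives monotone passage to the limit and the limit is usc. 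So it suffices to treat smooth $u$.

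For smooth $u$, write $\psi(f(z))+u(z,f(z))$ as the pullback of the local weight $\phi:=\psi+u$ along the graph of a holomorphic $f$, so $\sum_j\partial_{z_j}\partial_{\bar z_j}\bigl(\psi(f)+u(\cdot,f)\bigr)=\sum_j\bigl[\gamma_f^*(i\partial\bar\partial\phi)\bigr]_{j\bar j}\ge 0$; letting the $1$-jet of $f$ vary freely forces, first, $\eta+i\partial\bar\partial_Xu(z,\cdot)\ge 0$ on $X$ for each $z$ (otherwise a fibre direction pushed to infinity violates it), and, second, minimising each summand over the free $1$-jet, the pointwise inequality $\sum_{j=1}^m A_j\ge 0$, where $A_j:=\phi_{z_j\bar z_j}-\bigl\langle(\eta+i\partial\bar\partial_Xu)^{-1}\,\bar\partial_X\phi_{z_j},\,\bar\partial_X\phi_{z_j}\bigr\rangle$ is the Schur complement recording the value of $i\partial\bar\partial\phi$ on the fibrewise–minimal lift of $\partial_{z_j}$. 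Replacing $u$ by $(1-\delta)u$ keeps it bounded and $\eta$-subharmonic on graphs (since $\eta+(1-\delta)i\partial\bar\partial u=\delta\,\pi_X^*\eta+(1-\delta)(\eta+i\partial\bar\partial u)$ still has nonnegative graph–traces) and makes the fibre curvature $\ge\delta\,\eta>0$; letting $\delta\to 0$, using uniform convergence and again the limiting principle for subharmonic norm functions, I may assume $\eta+i\partial\bar\partial_Xu(z,\cdot)>0$ on $X$ for every $z$.

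Now $\mathrm{Hilb}_{E\otimes K_X}(u)$ is a smooth Hermitian metric on $\mathbf E:=D\times H^0(X,E\otimes K_X)$, which is the $L^2$-metric on the direct image $\pi_*\bigl(K_{D\times X/D}\otimes\pi_X^*E\bigr)$ for the trivial fibration with weight $\phi$ on $\pi_X^*E$. From the identity $\langle i\Theta^{\mathbf E}_{z_j\bar z_j}s_0,s_0\rangle=\|D'_{z_j}s_0\|^2-\partial_{z_j}\partial_{\bar z_j}\|s_0\|^2$ for a constant (holomorphic) section $s_0\in H^0(X,E\otimes K_X)$, a direct computation à la Berndtsson gives
\[
\langle i\Theta^{\mathbf E}_{z_j\bar z_j}s_0,s_0\rangle=\int_X \phi_{z_j\bar z_j}\,g(s_0,s_0)\,e^{-u}\ -\ \bigl\|v_j\bigr\|^2_{L^2(ge^{-u})},
\]
where $v_j$ is the minimal $L^2$-solution on $X$ of $\bar\partial v_j=(\bar\partial_X\phi_{z_j})\wedge s_0$ (equivalently, the part of $\phi_{z_j}s_0$ orthogonal to $H^0(X,E\otimes K_X)$). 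The Hörmander–Nakano $L^2$-estimate for $(n,1)$-forms valued in $E$ with the fibrewise positively curved metric $g\,e^{-u}$ bounds $\|v_j\|^2$ by $\int_X\langle(\eta+i\partial\bar\partial_Xu)^{-1}\bar\partial_X\phi_{z_j},\bar\partial_X\phi_{z_j}\rangle\,g(s_0,s_0)e^{-u}$, so $\langle i\Theta^{\mathbf E}_{z_j\bar z_j}s_0,s_0\rangle\ge\int_X A_j\,g(s_0,s_0)e^{-u}$; summing over $j$ and invoking the previous step,
\[
\bigl\langle \Lambda\, i\Theta^{\mathbf E}\, s_0,\ s_0\bigr\rangle=\sum_{j=1}^m\langle i\Theta^{\mathbf E}_{z_j\bar z_j}s_0,s_0\rangle\ \ge\ \int_X\Bigl(\sum_j A_j\Bigr)g(s_0,s_0)e^{-u}\ \ge\ 0,
\]
i.e. $\Lambda\,i\Theta^{\mathbf E}$ is a positive semidefinite endomorphism. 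Since the trace curvature of $\mathbf E$ is $\ge 0$, that of its dual $\mathbf E^*$ is $\le 0$, and the elementary bound $\partial_{z_j}\partial_{\bar z_j}\log\|\sigma\|^2_{h^*}\ge-\langle i\Theta^{\mathbf E^*}_{z_j\bar z_j}\sigma,\sigma\rangle/\|\sigma\|^2$ for a holomorphic section $\sigma$ of $\mathbf E^*$ (with $h^*$ the dual metric), summed over $j$, yields $\Delta\log\|\sigma\|^2_{h^*}\ge 0$. That is, $\mathrm{Hilb}^*_{E\otimes K_X}(u)$ is a subharmonic norm function.

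I expect two points to carry the weight. The first is the approximation lemma of the first step: regularizing, from above by smooth functions, a weight that is subharmonic only along the \emph{nonlinear} family of graphs of holomorphic maps $\Omega\subset D\to X$ — not along all analytic discs; this is the technical input deferred to Section~\ref{section approx}, and I expect it to be the genuinely delicate part (ordinary mollification in $z$ does not respect ``subharmonic on graphs''). The second is the bookkeeping in the curvature step: verifying that the ``second–fundamental–form'' correction $\|v_j\|^2$ is controlled by Hörmander precisely by the Schur–complement integral, so that summing over $j$ produces exactly $\int_X\sum_jA_j$ — in other words, that the curvature of the direct image ``sees'' no more of $i\partial\bar\partial u$ than its restriction to graph directions. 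Degeneracy of the fibre curvature, vanishing of the test section $\sigma$, and non-smoothness of $\mathrm{Hilb}$ are all absorbed by the limiting arguments of the first step and are routine.
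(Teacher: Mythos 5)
Your proposal is correct and follows essentially the same route as the paper: reduce to smooth weights via the deferred approximation lemma and monotone limits of subharmonic norm functions, force fibrewise positivity by sending the $1$-jet of $f$ to infinity, bound the trace of the direct-image curvature from below by $\int_X K$ with $K$ the sum of Schur complements (Berndtsson's computation plus H\"ormander), and prove $K\ge 0$ by choosing the minimizing $1$-jet in the graph-subharmonicity inequality. The only cosmetic differences are that you obtain strict fibrewise positivity via the $(1-\delta)u$ perturbation where the paper builds an $\varepsilon$-margin into Lemma~\ref{approx} itself, and you conclude with the elementary log-plurisubharmonicity estimate for the dual where the paper cites Coifman--Semmes.
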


The following approximation lemma is somewhat technical and we postpone its proof to section \ref{section approx}.
\begin{lemma}\label{approx}
 Let $u$ be a bounded usc function on $D\times X$, $\eta$-subharmonic on graphs. Then for $D'$ relatively compact open in $D$, there exist $\varepsilon_j \searrow 0$ and $u_j\in C^\infty(D'\times X)$ decreasing to $u$ such that for any holomorphic map $f$ from an open subset of $D'$ to $X$, $\Delta(\psi(f(z))+u_j(z,f(z)))\geq \varepsilon_j\Delta(\psi(f(z))$, where $\eta=i\partial \Bar{\partial}\psi $ locally.
\end{lemma}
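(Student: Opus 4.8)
The plan is to read the desired inequality as a \emph{strict} form of graph-subharmonicity and then run a Demailly-type regularization. Since $\Delta\big(\psi(f(z))+u_j(z,f(z))\big)\ge\varepsilon_j\Delta\big(\psi(f(z))\big)$ is the same as saying that $(1-\varepsilon_j)\psi(f(z))+u_j(z,f(z))$ is subharmonic, i.e. that $u_j$ is $\big((1-\varepsilon_j)\eta\big)$-subharmonic on graphs, we need smooth approximants from above that are subharmonic on graphs for a slightly shrunk curvature form. I would construct them in three stages: a loss-free mollification in the base variable $z$ (available on all of $D'$ since $D'\Subset D$), a mollification in the fibre variable $x$ carried out in coordinate charts of $X$, and a patching of the chart approximants by regularized maxima. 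The basic tool is a translation trick: since a graph $\{(z,f(z))\}\subset\CC^m\times\CC^n$ of a holomorphic map is carried to another such graph by every translation of $\CC^m\times\CC^n$, mollifying $\psi+u$ jointly in $(z,x)$ against a radial kernel $\chi_\delta$ rewrites $(\psi+u)*\chi_\delta\big(z,f(z)\big)$ as a positive average over $(w,y)$ of the functions $z\mapsto(\psi+u)\big(z',\tilde f(z')\big)$, $z'=z-w$, $\tilde f(z')=f(z'+w)-y$, each subharmonic in $z$ by hypothesis; hence $(\psi+u)*\chi_\delta$ is again subharmonic on graphs, with no loss.

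The monotone convergence needs one preliminary remark. In a chart $V\cong\Omega\Subset\CC^n$ of $X$ with local potential $\psi$ of $\eta$, the function $\Psi:=\psi+u$ is Euclidean-subharmonic on $D'\times\Omega$: for the smooth $\Psi*\chi_\delta$ the graph condition at a point reads $\operatorname{tr}(H_{zz})+\operatorname{tr}(H_{zx}A)+\operatorname{tr}(A^{*}H_{xz})+\operatorname{tr}(A^{*}H_{xx}A)\ge 0$ for all matrices $A$, where $H$ is the complex Hessian of $\Psi*\chi_\delta$ in block form, and taking $A=0$ and $A\to\infty$ yields $\operatorname{tr}(H_{zz})\ge0$ and $H_{xx}\ge0$, so $\operatorname{tr}(H)\ge0$; letting $\delta\to 0$ gives the claim. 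Therefore $\Psi*\chi_\delta\searrow\Psi$, so $u^{V}_{\delta}:=\Psi*\chi_\delta-\psi$ is smooth on a slightly shrunk $D'\times\Omega$, satisfies $u^{V}_{\delta}\ge u$, decreases to $u$, and is $\eta$-subharmonic on graphs into $\Omega$ (since $\psi+u^V_\delta=\Psi*\chi_\delta$ is). To gain strictness, fix $C_V$ with $\eta\le C_V\,\omega_{\mathrm{euc}}$ on $\overline{\Omega'}$, $\Omega'\Subset\Omega$, pick $\delta_j\searrow0$, $\varepsilon_j\searrow0$, and set $w^{(V)}_j:=u^V_{\delta_j}+\sigma^V_j$ where $\sigma^V_j\ge0$ is a small multiple of $|x|^2$ with $i\partial\bar\partial\sigma^V_j\ge\varepsilon_j\eta$ on $\Omega'$ and $\|\sigma^V_j\|_{C^0(\Omega')}\to0$; then $w^{(V)}_j$ is smooth on $D'\times\Omega'$, $\ge u$, decreases to $u$, and $\Delta\big(\psi(f(z))+w^{(V)}_j(z,f(z))\big)\ge\operatorname{tr}\big(f^{*}(i\partial\bar\partial\sigma^V_j)\big)\ge\varepsilon_j\operatorname{tr}(f^{*}\eta)=\varepsilon_j\Delta\big(\psi(f(z))\big)$ for graphs into $\Omega'$.

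Finally, cover $X$ by charts $V_1,\dots,V_N$ with $V_l''\Subset V_l'\Subset V_l$ and $X=\bigcup_l V_l''$, write $w^{(l)}_j:=w^{(V_l)}_j$, and define $u_j:=\operatorname{regmax}_{\sigma_j}\!\big((w^{(l)}_j+\lambda_l)_l\big)$ with the regularized maximum. This operation preserves the relevant class --- it commutes with adding the common function $(1-\varepsilon_j)\psi(f(z))$ and carries subharmonic functions to subharmonic functions --- and it is local, so near a point only the charts through that point contribute. The main obstacle is choosing the constants $\lambda_l$ (Richberg's argument, using that the cores $V_l''$ already cover $X$) and the scale $\sigma_j$ so that on each collar $V_l'\setminus V_l''$ the function $w^{(l)}_j+\lambda_l$ is dominated, by a margin exceeding $\sigma_j$, by some $w^{(l')}_j+\lambda_{l'}$ with $V_{l'}''$ meeting the collar; then $u_j$ is a global smooth function on $D'\times X$, $\varepsilon_j$-strictly $\eta$-subharmonic on graphs into $X$, and $\ge u$, and after arranging $\lambda_l\to0$, $\sigma_j\to0$ and passing to a subsequence, $u_j\searrow u$. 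I expect the delicate part to be exactly here: since $u$ is only upper semicontinuous, the $w^{(l)}_j$ --- though squeezed between $u$ and $u+o(1)$ pointwise --- need not tend to $u$ uniformly on overlaps, so the Richberg reconciliation of the charts against the loss $\varepsilon_j$, together with the simultaneous bookkeeping of $\delta_j,\varepsilon_j,\lambda_l,\sigma_j$ needed to keep $u_j$ above $u$ and decreasing, is where the real work lies; reducing first to continuous $u$ by an auxiliary regularization (so Dini's theorem applies) would make the overlaps uniformly comparable and the standard gluing go through.
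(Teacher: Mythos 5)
Your architecture is essentially the paper's: mollify $\psi+u$ in coordinate charts (the translation trick showing that joint convolution preserves graph-subharmonicity with no loss is exactly what the paper uses), establish separate subharmonicity in $z$ and plurisubharmonicity in $x$ to get monotone convergence of the mollifications, glue the charts by a (regularized) maximum, and perturb to gain strictness. Your way of getting strictness --- adding a small multiple of $|x|^2$ with $i\partial\bar\partial\sigma_j\ge\varepsilon_j\eta$ --- is in fact cleaner than the paper's, which first loses curvature (producing $(1+\varepsilon)\eta$-subharmonic approximants via the cutoffs $\frac{\varepsilon}{C}\chi_\alpha$) and only recovers the strict inequality at the very end by scaling the (negative) approximants by $(1-1/k)$.

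However, the step you flag as ``where the real work lies'' is a genuine gap, and it is precisely the crux of the paper's proof. To run the Richberg gluing you must dominate, on each collar $V_l'\setminus V_l''$, the function $w^{(l)}_j$ by some $w^{(l')}_j$ up to a controlled margin; this requires knowing that the fiber-mollifications of $\psi_\alpha+u$ computed in two different coordinate charts of $X$ agree up to an error that is \emph{uniformly} small on overlaps. For a merely bounded usc $u$ this is not automatic, and your proposed fallback --- ``reduce first to continuous $u$ by an auxiliary regularization'' --- is circular: producing continuous graph-subharmonic approximants decreasing to $u$ is exactly what the chart-gluing is needed for in the first place. The paper supplies the missing ingredient as a separate lemma (following B{\l}ocki--Ko{\l}odziej): comparing the convolution $u_{\delta_2}$ with the sup $\hat u_{\delta_2}$ and the spherical mean $\tilde u_{\delta_2}$ over balls of radius $\delta_2$, using convexity of these quantities in $\log\delta_2$ together with the \emph{boundedness} of $u$, one shows $u^F_{\delta_1,\delta_2}-u_{\delta_1,\delta_2}\to 0$ locally uniformly as $\delta_2\to 0$, uniformly in $\delta_1$; this is what makes the discrepancies between charts uniformly small and the gluing legitimate. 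Without this (or an equivalent) your construction does not close. A secondary, fixable point: merely ``passing to a subsequence'' does not make the glued $u_j$ decreasing; the paper needs an interleaved induction (choosing $j_k$ so that $u^k_{j_k}+1/j_k+M/j_k$ sits below the previous approximant) to arrange monotonicity, and you would need the same bookkeeping.
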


\begin{proof}[Proof of Theorem \ref{thm bo bundle}]
Since being a subharmonic norm function is a local property, we focus on $D'$, a relatively compact open set in $D$. Take $\varepsilon_j$ and $u_j$ as in Lemma \ref{approx}. Assuming the theorem holds for such a $u_j$, namely, the dual metric $\text{Hilb}^*_{E\otimes K_X}(u_j)$ is a subharmonic norm function, it follows that $\text{Hilb}^*_{E\otimes K_X}(u)$ is also a subharmonic norm function because $\text{Hilb}^*_{E\otimes K_X}(u_j)$ decreases to $\text{Hilb}^*_{E\otimes K_X}(u)$ as $j\to \infty$.

As a result, we only need to prove the theorem for $u\in C^\infty(D'\times X)$ with the property that there exists $\varepsilon>0$ such that for any holomorphic function $f$ from an open subset of $D'$ to $X$, 
\begin{equation}\label{epsilon}
 \Delta(\psi(f(z))+u(z,f(z)))\geq \varepsilon\Delta(\psi(f(z)) \textup{, where $\eta=i\partial \Bar{\partial}\psi$ locally.}    
\end{equation}

In a coordinate system $\Omega\subset \mathbb{C}^n$ on $X$, we will use Greek letters $\mu,\lambda$ for indices of coordinates on $X$, and Roman letters $i,j$ for indices of coordinates on $D$; moreover, $f^\mu$ means the $\mu$-th component of $f$, whereas $\psi_{\mu\Bar{\lambda}}$, $u_{i\Bar{i}}$, and $u_{i\Bar{\lambda}}$ mean partial derivatives $\partial^2\psi/\partial x_\mu \partial\bar{x}_\lambda$, $\partial^2 u/\partial z_i \partial\bar{ z_i}$, and $\partial^2 u/\partial z_i \partial\bar{x}_\lambda$ respectively. In this coordinate system $\Omega\subset \mathbb{C}^n$ on $X$, the inequality (\ref{epsilon}) becomes
\begin{align}\label{subhar ineq}
\varepsilon\sum_{i, \lambda, \mu}\psi_{\mu \bar{\lambda} }\frac{\partial f^{\mu}}{\partial z_i}\frac{\partial \Bar{f^{\lambda}}}{\partial \Bar{z_i}}\leq
\begin{multlined}[t]
\sum_{i, \lambda, \mu}\psi_{\mu \bar{\lambda} }\frac{\partial f^{\mu}}{\partial z_i}\frac{\partial \Bar{f^{\lambda}}}{\partial \Bar{z_i}}+\sum_iu_{i\bar{i}}\\
+\sum_{i,\lambda }u_{i \bar{\lambda}}\frac{\partial \Bar{f^{\lambda}}}{\partial \Bar{z_i}}+\sum_{i,\mu}u_{\bar{i}\mu}\frac{\partial f^{\mu}}{\partial z_i}+\sum_{i, \lambda, \mu}u_{\mu \bar{\lambda} }\frac{\partial f^{\mu}}{\partial z_i}\frac{\partial \Bar{f^{\lambda}}}{\partial \Bar{z_i}}. 
\end{multlined}
\end{align}
In (\ref{subhar ineq}), 
choose $f(z)=N(\xi_1,\xi_2,...,\xi_n)z_1$ where $N$ is a positive number and $(\xi_1,\xi_2,...,\xi_n)\in \mathbb{C}^n$, divide the resulting (\ref{subhar ineq}) by $N^2$ and send $N$ to infinity, to obtain $(\psi_{\mu\Bar{\lambda}}+u_{\mu\Bar{\lambda}})\geq \varepsilon(\psi_{\mu\Bar{\lambda}})$ as matrices, and hence $(\psi_{\mu\Bar{\lambda}}+u_{\mu\Bar{\lambda}})$ is positive definite.

Let $L^2(X,E\otimes K_X)$ be the space of measurable sections $s$ whose $L^2$ norm $\int_Xg(s,s)e^{-u(z,\cdot)}$ is finite. Since different $z$ will give rise to comparable $L^2$ norms, the space $L^2(X,E\otimes K_X)$ does not change with $z$, and so we have a Hermitian Hilbert bundle $D'\times L^2(X,E\otimes K_X)\to D'$ which has $D'\times H^0(X,E\otimes K_X)\to D'$ as a subbundle. Denote the curvature of the subbundle by $\Theta=\sum \Theta_{j\bar{k}}dz_j\wedge dz_{\Bar{k}}$. Following the computations in the proof of \cite[Theorem 1.1]{Bern09} we deduce 
\begin{equation}\label{L2} 
\sum_{j}(\Theta_{j\Bar{j}}s,s)\geq \int_{X} K(z,\cdot)  g(s,s)e^{-u(z,\cdot)}
\end{equation}
where $s\in H^0(X,E\otimes K_X)$, and $K:D'\times X \to \mathbb{R}$ is a smooth function, given in local coordinates on $X$ by $$K=\sum_j(u_{j\Bar{j}}-\sum_{\lambda, \mu}(\psi+u)^{\Bar{\lambda} \mu}u_{j\Bar{\lambda}}u_{\Bar{j}\mu});$$ here $(\psi+u)^{\Bar{\lambda} \mu}$ stands for the inverse matrix of $(\psi+u)_{\Bar{\lambda} \mu}$. 

We claim that $K\geq 0$. First notice that $\psi$ is independent of $z$, so if we denote $\psi(x)+u(z,x)$ by $\phi(z,x)$, then $K=\sum_j(\phi_{j\Bar{j}}-\sum_{\lambda, \mu}\phi_{j\Bar{\lambda}}\phi^{\Bar{\lambda} \mu}\phi_{\Bar{j}\mu})$. Fix $(z_0,x_0)\in D'\times X$, since the matrix $(\phi_{\mu\bar{\lambda}})$ is positive definite, we can choose local coordinates on $X$ around $x_0$ such that $(\phi_{\mu\bar{\lambda}})$ is the identity matrix at $(z_0,x_0)$, and therefore $K(z_0,x_0)=\sum_j(\phi_{j\Bar{j}}-\sum_\lambda |\phi_{j\Bar{\lambda}}|^2)(z_0,x_0)$. For a holomorphic function $f$ from an open subset of $D'$ to $X$, the subharmonicity of $\phi(z,f(z))$ reads  
\begin{equation}\label{sh}
    \sum_i\phi_{i\Bar{i}}+\sum_{i ,\lambda}\phi_{i \Bar{\lambda}}\frac{\partial \Bar{f^{\lambda}}}{\partial \Bar{z_i}}+\sum_{i,\mu}\phi_{\Bar{i}\mu}\frac{\partial f^{\mu}}{\partial z_i}+\sum_{i ,\lambda, \mu}\phi_{ \mu\Bar{\lambda}}\frac{\partial f^{\mu}}{\partial z_i}\frac{\partial \Bar{f^{\lambda}}}{\partial \Bar{z_i}}\geq 0.
\end{equation}
Without loss of generality, we assume $(z_0,x_0)=(0,0)$ and choose $f^{\lambda}=-\sum_i\phi_{i\Bar{\lambda}}(0,0) z_i$ in (\ref{sh}), and it becomes $\sum_j(\phi_{j\Bar{j}}-\sum_\lambda |\phi_{j\Bar{\lambda}}|^2)(0,0) \geq 0$. Therefore, $K\geq 0$. (See also the remark after Lemma \ref{+ det} for a slightly different proof of this claim, and an invariant meaning of $K$).

As a result, (\ref{L2}) implies  $\sum_j(\Theta_{j\Bar{j}}s,s)\geq 0$, and hence the curvature of the dual metric $\text{Hilb}^*_{E\otimes K_X}(u)$ satisfies the opposite inequality; according to \cite[Theorem 4.1]{CS93}, this implies $\text{Hilb}^*_{E\otimes K_X}(u)$ is a subharmonic norm function. 
\end{proof}

Now we replace $(E,g)$ by $(L^k\otimes K_X^*,h^k\otimes \omega^n)$, which is positively curved for large $k$ since $\Theta(h^k\otimes \omega^n)=k\omega+\Ric \, \omega$. We have the following proposition regarding the metric $H_k(u)$ on the bundle $D\times H^0(X,L^k)$.
\begin{prop}\label{prop}
    Suppose $u$ is a bounded usc function on $D\times X$ and with some $\varepsilon\in (0,1)$ $u$ is $(1-\varepsilon)\omega$-subharmonic on graphs. Then there exists $k_0=k_0(\varepsilon,\omega)$, independent of $u$, such that, for $k\geq k_0$, the dual metric $H^*_k(u)$ is a subharmonic norm function.
\end{prop}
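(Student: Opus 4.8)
The plan is to obtain Proposition~\ref{prop} directly from Theorem~\ref{thm bo bundle} applied to the line bundle $(E,g)=(L^k\otimes K_X^*,\,h^k\otimes\omega^n)$, whose curvature is $\eta_k:=\Theta(h^k\otimes\omega^n)=k\omega+\Ric\,\omega$. Under the canonical isomorphism $E\otimes K_X\cong L^k$, with the $K_X^*$-factor of $g$ being the metric induced by the volume form $\omega^n$, one has $g(s,s)=h^k(s,s)\,\omega^n$ as $(n,n)$-forms for $s\in H^0(X,L^k)$, so that $\text{Hilb}_{E\otimes K_X}(u)$ is exactly the metric $H_k(u)$ on $D\times H^0(X,L^k)$. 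Hence it suffices to check that, once $k$ is large, $u$ satisfies the hypotheses of Theorem~\ref{thm bo bundle} for this $(E,g)$.

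First I would fix, using compactness of $X$ and positivity of $\omega$, a constant $C=C(\omega)$ with $\Ric\,\omega\ge -C\omega$, and set $k_0:=C+1-\varepsilon$ (rounded up to an integer). For $k\ge k_0$ we then have $(k-1+\varepsilon)\omega+\Ric\,\omega\ge 0$ as a $(1,1)$-form on $X$, and in particular $\eta_k=k\omega+\Ric\,\omega\ge(1-\varepsilon)\omega>0$, so $(E,g)$ is positively curved. Note that $k_0$ depends only on $\varepsilon$ and, through $C$, on $\omega$, so it is independent of $u$.

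Next I would upgrade the graph-subharmonicity. Let $\psi$ be a local potential of $\omega$ and $\rho$ a local potential of $\Ric\,\omega$, so that $k\psi+\rho$ is a local potential of $\eta_k$, and write $k\psi+\rho=(1-\varepsilon)\psi+\big[(k-1+\varepsilon)\psi+\rho\big]$. For any holomorphic map $f$ from an open subset of $D$ into $X$, the function $(1-\varepsilon)\psi(f(z))+u(z,f(z))$ is subharmonic by the hypothesis that $u$ is $(1-\varepsilon)\omega$-subharmonic on graphs, while $(k-1+\varepsilon)\psi(f(z))+\rho(f(z))$ is subharmonic because the form $(k-1+\varepsilon)\omega+\Ric\,\omega$ is nonnegative. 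Adding these, $k\psi(f(z))+\rho(f(z))+u(z,f(z))$ is subharmonic, which is precisely the statement that $u$ is $\eta_k$-subharmonic on graphs. Since $u$ is also bounded and usc, Theorem~\ref{thm bo bundle} applies and yields that $\text{Hilb}^*_{E\otimes K_X}(u)=H_k^*(u)$ is a subharmonic norm function, which is the conclusion of the Proposition.

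I do not expect a genuine obstacle here: the argument is essentially a bookkeeping of local potentials. The two points requiring care are the identification $g(s,s)=h^k(s,s)\,\omega^n$ under the $K_X$-twist — the standard device that lets one feed $L^k$ into Berndtsson-type statements — and the observation that the threshold $k_0$ produced from $\Ric\,\omega\ge -C\omega$ is uniform in $u$. Boundedness of $u$ is used both to guarantee that the Hilbert integral converges for every $z$ (so that $H_k(u)$ is a genuine Hermitian metric) and to invoke Theorem~\ref{thm bo bundle}.
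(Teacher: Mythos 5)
Your overall strategy --- feeding $(E,g)=(L^k\otimes K_X^*,\,h^k\otimes\omega^n)$ into Theorem \ref{thm bo bundle} and splitting the local potential of $\eta_k=k\omega+\Ric\,\omega$ --- is the same as the paper's, but there is a genuine slip in the identification of the metrics, and it propagates into your decomposition and your threshold. By definition $H_k(u)(s,s)=\int_X h^k(s,s)e^{-ku}\omega^n$, with the weight $ku$ (not $u$) in the exponent; so under the identification $g(s,s)=h^k(s,s)\,\omega^n$ one has $\text{Hilb}_{E\otimes K_X}(ku)=H_k(u)$, whereas you assert $\text{Hilb}_{E\otimes K_X}(u)=H_k(u)$. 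Consequently the hypothesis of Theorem \ref{thm bo bundle} must be verified for the weight $ku$: you need $ku$ to be $(k\omega+\Ric\,\omega)$-subharmonic on graphs. Your decomposition $k\psi+\rho=(1-\varepsilon)\psi+\bigl[(k-1+\varepsilon)\psi+\rho\bigr]$ only accommodates a single copy of $u$, so what it actually establishes is that $u$ is $\eta_k$-subharmonic on graphs; that yields subharmonicity of the dual of $z\mapsto\int_X h^k(s,s)e^{-u(z,\cdot)}\omega^n$, which is not $H_k^*(u)$.

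The repair is immediate and is exactly what the paper does: write $k\psi+\rho+ku=k\bigl[(1-\varepsilon)\psi+u\bigr]+\bigl[\varepsilon k\psi+\rho\bigr]$. Composed with any graph $(z,f(z))$, the first bracket is $k$ times a subharmonic function by the hypothesis on $u$, and the second is psh once $\varepsilon k\omega+\Ric\,\omega\ge 0$, i.e.\ for $k\ge k_0$ with $k_0\ge C/\varepsilon$ where $\Ric\,\omega\ge -C\omega$. Note that this also changes the threshold: your $k_0=C+1-\varepsilon$ comes from the uncorrected decomposition, while the correct one is of order $C/\varepsilon$ (still depending only on $\varepsilon$ and $\omega$, hence independent of $u$, as required). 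With this adjustment your proof coincides with the paper's.
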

\begin{proof}
   In order to use Theorem \ref{thm bo bundle}, we check if $ku$ is $(k\omega+\Ric\, \omega)$-subharmonic on graphs. Suppose $\omega=i\partial\Bar{\partial}\psi$ and $\Ric\, \omega=i\partial\Bar{\partial}\phi$ locally, then we want to see if $k\psi(f(z))+\phi(f(z))+ku(z,f(z))$ is subharmonic for any holomorphic map $f$. Note that $k\psi+\phi+ku = k(1-\varepsilon)\psi+ku+\varepsilon k\psi+\phi$, and $k(1-\varepsilon)\psi(f(z))+ku(z,f(z))$ is subharmonic by the assumption. On the other hand, there exists $k_0$ depending on $\varepsilon, \omega$ such that $\varepsilon k\psi+\phi$ is plurisubharmonic (psh) for $k\geq k_0$. Therefore, $ku$ is $(k\omega+\Ric\, \omega)$-subharmonic on graphs for $k\geq k_0$. By Theorem \ref{thm bo bundle}, the metric $\text{Hilb}^*_{L^k}(ku)$ is a subharmonic norm function for $k\geq k_0$. The proposition follows since $\text{Hilb}_{L^k}(ku)=H_k(u)$. 
   
\end{proof}    

\section{Approximation by Hermitian--Yang--Mills metrics} \label{sec quan}

 Recall that $D$ is in $\mathbb{C}^m$, and $(L,h)\to X^n$ is a positive line bundle with curvature $\omega$.

\begin{lemma}\label{lemma psh in X}
Let $u$ be an usc function on $D\times X$, $\omega$-subharmonic on graphs. Then for any fixed $z\in D$, $u(z,x)$ is $\omega$-psh on $X$, and for any fixed $x\in X$, $u(z,x)$ is subharmonic on $D$.
\end{lemma}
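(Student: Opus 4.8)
The plan is to handle the two assertions separately. The statement ``$u(\cdot,x)$ is subharmonic on $D$'' is immediate: the constant map $f\equiv x$ is holomorphic, so by the very definition of $\omega$-subharmonicity on graphs, $\psi(x)+u(z,x)$ is subharmonic in $z$ on open subsets of $D$, and subtracting the constant $\psi(x)$ leaves $u(\cdot,x)$ subharmonic on $D$. The substance is in the first assertion, which I would prove as the non-smooth analogue of the ``plug in a large linear map and let the scale go to infinity'' trick already used in the proof of Theorem~\ref{thm bo bundle}.

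Fix $z_0\in D$. Since being $\omega$-psh is a local property, I want to show that $w:=\psi+u(z_0,\cdot)$ is psh near an arbitrary $x_0\in X$, where $\psi$ is a local potential of $\omega$ on a coordinate patch $\Omega\ni x_0$; equivalently, that $w\circ g$ is subharmonic for every holomorphic disc $g\colon\Delta\to\Omega$. Fix such a $g$. For $\lambda$ large, $B(z_0,1/(2\lambda))\subset D$ and $f_\lambda(z):=g(\lambda(z_1-(z_0)_1))$ is a well-defined holomorphic map into $\Omega$; hence $\Psi_\lambda(z):=\psi(f_\lambda(z))+u(z,f_\lambda(z))$ is subharmonic on $B(z_0,1/(2\lambda))$. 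Applying the solid sub-mean value inequality for $\Psi_\lambda$ on the ball $B(z_0,\sigma_0/\lambda)$ (with a fixed $\sigma_0\in(0,1/2)$) and changing variables $z=z_0+\zeta/\lambda$ gives
\[
\psi(g(0))+u(z_0,g(0))\ \le\ \fint_{B(0,\sigma_0)\subset\mathbb C^m}\Big[\psi(g(\zeta_1))+u\big(z_0+\zeta/\lambda,\,g(\zeta_1)\big)\Big]\,dV(\zeta).
\]
Letting $\lambda\to\infty$, upper semicontinuity of $u$ gives $\limsup_\lambda u(z_0+\zeta/\lambda,g(\zeta_1))\le u(z_0,g(\zeta_1))$ for each $\zeta$, and since $u$ is bounded above the reverse Fatou lemma yields
\[
w(g(0))\ \le\ \fint_{B(0,\sigma_0)\subset\mathbb C^m}w(g(\zeta_1))\,dV(\zeta)\ =\ c_m\,\sigma_0^{-2m}\int_{|\zeta_1|<\sigma_0}w(g(\zeta_1))\,(\sigma_0^2-|\zeta_1|^2)^{m-1}\,dA(\zeta_1),
\]
the last equality obtained by integrating out $\zeta_2,\dots,\zeta_m$. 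Running the same argument with the reparametrized discs $\zeta\mapsto g(a+c\zeta)$ shows that $v:=w\circ g$ satisfies, at every $a\in\Delta$ and all small radii, a sub-mean value inequality against a rotationally symmetric, everywhere-positive averaging density on a disc around $a$; since $v$ is usc and locally bounded above, the usual maximum-principle argument (harmonic functions satisfy these weighted averages with equality, so a positive interior maximum of $v-h$ would propagate along an open-and-closed subset of $\Delta$) forces $v$ subharmonic. Hence $w$ is psh near $x_0$, i.e. $u(z_0,\cdot)$ is $\omega$-psh on $X$.

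I expect the last step to be the only genuine obstacle: the dilation naturally produces an average over a real ball in $\mathbb C^m$, which projects to a weighted solid average over a disc in the $X$-direction rather than a clean circular average, so one has to invoke the (standard, but worth stating) fact that such weighted sub-mean value inequalities for all small radii still characterize subharmonicity. One should also be mildly careful that $u$ is bounded above near $z_0$ so that the reverse Fatou lemma applies, and that $f_\lambda$ is genuinely defined on an open subset of $D$, which is what forces $\lambda$ to be large.
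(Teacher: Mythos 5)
Your proof is correct, but it takes a genuinely different route from the paper's. The paper translates Slodkowski's abstract argument from \cite{Slo90a}: assuming the sub-mean value/maximum principle fails for $\lambda\mapsto\psi(\lambda e_1)+u(z_0,\lambda e_1)$, it invokes \cite[Lemma 4.5]{Slo86} to produce an $\mathbb{R}$-linear perturbation with a \emph{strict} interior maximum at some $(0,b)$, then forms the auxiliary function $W(z,\lambda_1,\dots,\lambda_m)=\sum_i v(z,\lambda_i)$ on $\mathbb{C}^m\times\mathbb{C}^m$ and restricts it to the graph $\lambda_i=b+(r/\varepsilon)z_i$, contradicting the ordinary maximum principle. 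You instead rescale a holomorphic disc directly, $f_\lambda(z)=g(\lambda(z_1-(z_0)_1))$, apply the solid sub-mean value inequality to $\psi(f_\lambda)+u(\cdot,f_\lambda)$ on a ball of radius $O(1/\lambda)$, and pass to the limit with reverse Fatou (legitimate, since $u$ is usc hence bounded above on compact subsets of $D\times X$) to obtain a weighted sub-mean value inequality for $(\psi+u(z_0,\cdot))\circ g$ against the positive radial density $(\sigma_0^2-|\zeta_1|^2)^{m-1}$; your closing maximum-principle argument correctly upgrades this family of inequalities (all small radii, every center) to subharmonicity, since harmonic functions satisfy the corresponding weighted equalities and the strictly positive weight forces the maximum set of $v-h$ to be open. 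The two proofs share the essential mechanism --- graphs whose derivative blows up ($\lambda\to\infty$ for you, slope $r/\varepsilon$ in the paper) transfer subharmonicity in $z$ along graphs into the fiber direction, exactly as in the ``$N\to\infty$'' step of the proof of Theorem \ref{thm bo bundle} --- but yours is self-contained and avoids the external separation lemma of Slodkowski, at the price of the (standard, and as you note, worth stating explicitly) characterization of subharmonicity by weighted solid averages; the paper's is shorter modulo the cited lemma and stays entirely within the maximum-principle formalism. A small simplification available to you: since plurisubharmonicity only needs to be checked on complex lines, you may take $g$ affine from the outset and skip the reparametrization step.
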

This can be seen as a special case of an abstract theorem in \cite[Section 1]{Slo90a}, whose proof we translate to our setting.
\begin{proof}
By choosing the holomorphic map $f$ constant in the definition of $\omega$-subharmonic on graphs, it follows immediately that $u(z,x)$ is subharmonic in $z.$

For a fixed $z_0\in D$, we want to show $x\mapsto \psi(x)+u(z_0,x)$ is psh in a coordinate system on $X$, where $\psi$ is a local potential of $\omega$. Without loss of generality, it suffices to prove that $\mathbb{C}\ni \lambda \mapsto \psi(\lambda e_1)+u(0,\lambda e_1)$ is subharmonic, where $e_1=(1,0,...,0)\in \mathbb{C}^n$. Let $U=\{\lambda\in \mathbb{C}:|\lambda-a|<R\}$ and $h(\lambda)$ harmonic on $U$ and continuous up to boundary. We will be done if $$\psi(a e_1)+u(0,a e_1)+h(a) \leq \max_{\lambda\in \partial U} \psi(\lambda e_1)+u(0,\lambda e_1)+h(\lambda).$$
Suppose the inequality is not true. By \cite[Lemma 4.5]{Slo86}, there is an $\mathbb{R}$-linear function $l:\mathbb{C}\to\mathbb{R}$ and $b\in U$ such that, if we denote 
\begin{equation}\label{summation of stuff}
v(z,\lambda)=\psi(\lambda e_1)+u(z,\lambda e_1)+h(\lambda)+l(\lambda)    
\end{equation}
then  $$v(0,b)>v(0,\lambda), \textup{ for }\lambda\in U-\{b\}.$$
Now define $W(z,\lambda_1,...,\lambda_m):=v(z,\lambda_1)+...+v(z,\lambda_m)$ in a neighborhood of $(0,b^*):=(0,b,...,b)$ in $\mathbb{C}^m\times \mathbb{C}^m$. As $W(0,b^*)>W(0,\lambda_1,...,\lambda_m)$ for $(\lambda_1,...,\lambda_m)\neq b^*$, there exists a ball $B \subset \mathbb{C}^m$ of radius $r$ centered at $b^*$ such that $$W(0,b^*)> \max_{\{0\}\times \partial B } W.$$
Since $W$ is usc, there exists $\varepsilon>0$ such that $W(z, \lambda_1,...,\lambda_m)<W(0,b^*)$, for $|z|\leq \varepsilon$ and $(\lambda_1,...,\lambda_m)\in \partial B$. Let $S=r/\varepsilon \Id_{\mathbb{C}^m}$. We have $W(z,b^*+S(z))<W(0,b^*)$ for $|z|=\varepsilon$, which contradicts the maximum principle because $W(z,b^*+S(z))= \sum^m_{i=1} v(z,b+r/\varepsilon z_i)$ is subharmonic by (\ref{summation of stuff}). 

%Since $\psi(f(z))+u(z,f(z))$ is subharmonic, \begin{equation} \sum_{i \lambda \mu}\psi_{\lambda \mu}\frac{\partial f_{\mu}}{\partial z_i}\frac{\partial \Bar{f_{\lambda}}}{\partial \Bar{z_i}}+\sum_iu_{ii}+\sum_{i \lambda}u_{i \lambda}\frac{\partial \Bar{f_{\lambda}}}{\partial \Bar{z_i}}+\sum_{i\mu}\bar{u}_{i\mu}\frac{\partial f_{\mu}}{\partial z_i}+\sum_{i \lambda \mu}u_{\lambda \mu}\frac{\partial f_{\mu}}{\partial z_i}\frac{\partial \Bar{f_{\lambda}}}{\partial \Bar{z_i}}\geq 0.\end{equation}Choose $f(z)=N(\xi_1,\xi_2,...,\xi_n)z_1$ where $N$ a positive number and $(\xi_1,\xi_2,...,\xi_n)\in \mathbb{C}^n$, divide the last expression by $N^2$ and send $N$ to infinity, we see $(\psi_{\lambda\mu}+u_{\lambda\mu})\geq 0$.

\end{proof}

Although in the introduction the boundary data $v$ is in $C^\infty(\partial D, \mathcal H_\omega)$, we will prove a lemma for a broader class of boundary data $\nu$. Let $\nu$ be a continuous map $\partial D\times X \to \mathbb{R}$ such that $\nu_z(\cdot):=\nu(z,\cdot)\in \PSH(X,\omega)$ for $z\in \partial D$.  Let 
\begin{align*}
G_\nu=\{u \in\textup{usc($D\times X$)}:& \textup{ $u$ is $\omega$-subharmonic on graphs}, \textup{and }\limsup_{D \ni z \to \zeta\in \partial D}u(z,x)  \leq \nu(\zeta,x)\}.
\end{align*}
In order to study the properties of the upper envelope $\mathcal V$ of $G_\nu$, we introduce a closely related family. With $\pi:D\times X \to X$ the projection, let $$F_\nu:=\{u:u\in \text{PSH}(D\times X,  \pi^*\omega)\textup{ and }\limsup_{D \ni z \to \zeta\in \partial D}u(z,x)  \leq \nu(\zeta,x)\}.$$ The upper envelope of $F_\nu$ extends to a solution $\mathcal U\in C(\overline{D}\times X)$ of 
\begin{equation*}
    \begin{cases}
    (\pi^*\omega +i\partial \bar{\partial}\mathcal U)^{n+m}=0 \text{  on $D\times X$}\\
    \pi^*\omega +i\partial \bar{\partial}\mathcal U\geq 0 \text{  on $D\times X$}\\
    \mathcal U|_{\partial D\times X}=\nu,
    \end{cases}
\end{equation*}
see for example \cite{Bou12, DW19}. In addition, we also need the solution $h$ to the Dirichlet problem 
\begin{equation*}
\begin{cases}
\sum_j h_{j\Bar{j}}+\Delta_\omega h+2n=0 \text{  on $D\times X$}\\
h|_{\partial D\times X}=\nu.
\end{cases}    
\end{equation*} 

\begin{lemma}\label{envelope}
If we denote the upper envelopes of $G_\nu$ and $F_\nu$ by $\mathcal V$ and $\mathcal U$ respectively, then  $\mathcal U\leq \mathcal V\leq h$ and $\lim_{(z,x)\to (z_0,x_0)\in \partial D\times X}\mathcal V(z,x)= \nu(z_0,x_0)$. Moreover, if $\nu$ is negative, then so is $\mathcal V$.
\end{lemma}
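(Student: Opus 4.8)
The plan is to establish the chain $\mathcal{U}\le\mathcal{V}\le h$ first, after which the boundary limit and the negativity follow quickly from Lemma~\ref{lemma psh in X} and from the properties of $\mathcal{U}$ and $h$ recalled above. For $\mathcal{U}\le\mathcal{V}$ I would check that $F_\nu\subseteq G_\nu$. Let $u\in\PSH(D\times X,\pi^*\omega)$ and let $f$ be a holomorphic map from an open set $W\subseteq D$ into $X$. On the part of $W$ over which $f$ takes values in a coordinate chart of $X$ carrying a local potential $\psi$ of $\omega$, the function $z\mapsto\psi(f(z))+u(z,f(z))$ is the restriction of the plurisubharmonic function $u+\pi^*\psi$ to the holomorphic graph $z\mapsto(z,f(z))$, hence plurisubharmonic, in particular subharmonic; so $u$ is $\omega$-subharmonic on graphs. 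As $u$ is usc and obeys the same boundary inequality, $u\in G_\nu$, and passing to upper envelopes gives $\mathcal{U}\le\mathcal{V}$.

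The core of the proof is $\mathcal{V}\le h$. I would show that every $u\in G_\nu$ is a viscosity subsolution of the linear uniformly elliptic equation $\sum_j w_{j\bar j}+\Delta_\omega w+2n=0$ solved by $h$. By Lemma~\ref{lemma psh in X}, for each fixed $x$ the slice $u(\cdot,x)$ is subharmonic on $D$ and for each fixed $z$ the slice $u(z,\cdot)$ is $\omega$-plurisubharmonic on $X$. Suppose a smooth $\varphi$ touches $u$ from above at an interior point $p_0=(z_0,x_0)$. Then $\varphi(\cdot,x_0)$ touches the subharmonic function $u(\cdot,x_0)$ from above at $z_0$, and comparing sub-mean values forces $\sum_j\varphi_{j\bar j}(p_0)\ge0$; likewise $\varphi(z_0,\cdot)$ touches the $\omega$-plurisubharmonic function $u(z_0,\cdot)$ from above at $x_0$, and the same comparison carried out along each complex line of a chart forces $\omega+i\partial\bar\partial_X\varphi\ge0$ at $p_0$ as a Hermitian form, hence, taking the trace against $\omega$, $\Delta_\omega\varphi(p_0)+2n\ge0$. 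Adding the two inequalities gives the subsolution property. To conclude I would compare $u$ with a strict supersolution: choosing $C_0$ so large that $C_0-|z|^2\ge0$ on $\overline D$, the function $h_\varepsilon:=h+\varepsilon(C_0-|z|^2)$ satisfies $\sum_j(h_\varepsilon)_{j\bar j}+\Delta_\omega h_\varepsilon+2n=-\varepsilon m<0$ on $D\times X$ and $h_\varepsilon\ge\nu$ on $\partial D\times X$; if $c:=\sup_{D\times X}(u-h_\varepsilon)$ were positive, the boundary inequality would force this supremum to be attained at an interior point $p_0$, where $h_\varepsilon+c$ is a smooth function touching $u$ from above, so the subsolution property would give $\sum_j(h_\varepsilon)_{j\bar j}(p_0)+\Delta_\omega h_\varepsilon(p_0)+2n\ge0$, contradicting the previous line. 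Hence $u\le h_\varepsilon$ for every $\varepsilon>0$, and letting $\varepsilon\to0$ yields $u\le h$, so $\mathcal{V}\le h$. (This is just the comparison principle for the linear equation; the $\limsup$ boundary condition is to be dealt with in the standard way, i.e. via the interior attainment of the maximum just used.)

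The two remaining assertions are then short. For the boundary limit: since $\mathcal{U}\le\mathcal{V}\le h$ and both $\mathcal{U}$ and $h$ lie in $C(\overline D\times X)$ with boundary trace $\nu$, for $(z_0,x_0)\in\partial D\times X$ we get
\[
\nu(z_0,x_0)=\liminf_{(z,x)\to(z_0,x_0)}\mathcal{U}\le\liminf_{(z,x)\to(z_0,x_0)}\mathcal{V}\le\limsup_{(z,x)\to(z_0,x_0)}\mathcal{V}\le\limsup_{(z,x)\to(z_0,x_0)}h=\nu(z_0,x_0),
\]
so $\mathcal{V}$ has limit $\nu(z_0,x_0)$ there. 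For negativity: if $\nu<0$ then $m_0:=\max_{\partial D\times X}\nu<0$ by compactness; for any $u\in G_\nu$ and any fixed $x$, $u(\cdot,x)$ is subharmonic on $D$ with $\limsup_{D\ni z\to\zeta}u(z,x)\le\nu(\zeta,x)$, so the maximum principle gives $u(z,x)\le\sup_{\zeta\in\partial D}\nu(\zeta,x)\le m_0$; taking the supremum over $u\in G_\nu$ gives $\mathcal{V}\le m_0<0$.

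I expect the step $\mathcal{V}\le h$ to be the genuine obstacle: converting ``$\omega$-subharmonic on graphs'' into a bona fide viscosity subsolution of the \emph{linear} operator, and then running the comparison with the correct handling of the $\limsup$ boundary condition (equivalently, checking that the envelope attains its boundary values from above). The inclusion $F_\nu\subseteq G_\nu$ and the deductions of the boundary behaviour and of negativity should be routine.
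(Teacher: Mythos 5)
Your proposal follows the paper's own proof in all four steps: $F_\nu\subset G_\nu$ yields $\mathcal U\le\mathcal V$, the slice properties from Lemma \ref{lemma psh in X} combined with the maximum principle for the linear operator $\sum_j\partial_{j\bar j}+\Delta_\omega$ yield $\mathcal V\le h$, squeezing $\mathcal V$ between $\mathcal U$ and $h$ gives the boundary limit, and the slice-wise maximum principle in $z$ for fixed $x$ gives negativity (the paper phrases this via the harmonic majorant $H_0$ of $\nu(\cdot,x_0)$, which is the same argument). Your viscosity-subsolution formulation with the strict supersolution $h_\varepsilon$ is simply a detailed write-out of the comparison step the paper dispatches with the words ``by the maximum principle,'' so the approaches coincide.
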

\begin{proof}
   Unraveling the definitions of $F_\nu$ and $G_\nu$, we see $F_\nu\subset G_\nu$, so $\mathcal U\leq \mathcal V$. For any $u\in G_\nu$, by Lemma \ref{lemma psh in X}, $u(z,\cdot)$ is $\omega$-psh for fixed $z$, hence $\Delta_\omega u+2n\geq 0$; in addition, $u(\cdot,x)$ is subharmonic for fixed $x$. By the maximum principle, $u\leq h$ and hence $\mathcal V\leq h$. $\mathcal U$ and $h$ are both equal to $\nu$ on $\partial D\times X$, and so is $\mathcal V$.
   
   For a fixed $x_0\in X$, let $H_0(z)$ be the harmonic function on $D$ with boundary values $\nu(z,x_0)$. For $u\in G_\nu$, we have $u(z,x_0)\leq H_0(z)$, and therefore $\mathcal V(z,x_0)\leq H_0(z)$. The second statement follows at once.
\end{proof}

With Proposition \ref{prop} at hand, we can start to prove Theorem \ref{thm:YM approx}. The following envelope will be used in the proof: for an usc function $F$ on $X$, we introduce $P(F):=\sup\{h \in \textup{PSH}(X,\omega) \textup{ such that } h \leq  F\} \in \textup{PSH}(X,\omega)$ (see \cite{Berm19}).
\begin{proof}[Proof of Theorem \ref{thm:YM approx}]
   Without loss of generality, we assume $v\leq 0$. Fix $\delta >1$, and for $z\in \partial D$, define $v^\delta_z=P(\delta v_z)$. By \cite[Lemma 4.9]{DW19}, $\partial D\times X \ni (z,x)\mapsto v^\delta_z(x)$ is continuous. Let $V^{\delta}$ be the upper envelope of $G_{v^\delta}$. By Lemma \ref{envelope}, $V^\delta \leq 0$, and so $u \leq 0$ for $u \in G_{v^\delta}$.  The next step is to have a better upper bound for $u \in G_{v^\delta}$. To that end, we can look instead at $\max\{u, c\}$, which is still in $G_{v^\delta}$ as long as the constant $c\leq \min v^\delta$. Since $\max\{u, c\}$ is bounded, we will assume $u$ is bounded. Moreover, $u/\delta$ is $\omega/\delta$-subharmonic on graphs. According to Proposition \ref{prop}, there exists $k_0=k_0(\delta)$ such that for $k\geq k_0$, $H_k^*(u/\delta)$ is a subharmonic norm function. Because $\limsup_{\partial D} H_k^*(u/\delta)\leq  H^*_k(v)$, it follows that $H_k^*(u/\delta)\in G^{k}_v$ and therefore $H_k^*(u/\delta)\leq V^{k}$ on $D$ and $FS_k(H_k(u/\delta))\leq FS_k((V^{k})^*)$. By Lemma \ref{lemma psh in X}, we have $\omega +i\partial \Bar{\partial}u/\delta\geq (1-1/\delta)\omega$, ($\partial \Bar{\partial}$ on $X$). The Ohsawa--Takegoshi extension theorem implies (see \cite[Theorem 2.11]{DLR18} or \cite[Lemma 4.10]{DW19}) that there exist $C>0$ and $k_0(\delta)$ such that, for $k\geq k_0$,  $$\frac{1}{\delta} u- \frac{C}{k}  \leq FS_k \circ H_k\Big(\frac{1}{\delta} u\Big)\leq FS_k((V^k)^*) .$$
   Since $\delta v\leq0$, both $V^{\delta}$ and $u$ are negative by Lemma \ref{envelope}, and as a result we have $u-C/k\leq FS_k((V^{k})^*)$; this is true for any $u \in G_{v^\delta }$, so we actually have $V^{\delta}-C/k\leq FS_k((V^{k})^*)$. In addition, since $v_z+(\delta-1)\inf_{\partial D\times X}(v_z)$ is a competitor in $P(\delta v_z)$, $$V+(\delta-1)\inf_{\partial D\times X}(v)\leq V^{\delta}.$$ Putting things together, we conclude
   \begin{equation}\label{lower}
       V+(\delta-1)\inf_{\partial D\times X}(v)-\frac{C}{k}\leq FS_k((V^{k})^*), \textup{ for } k\geq k_0(\delta).
   \end{equation}
   
   Next we claim that $FS_k ((V^{k}_z)^*)(x)$ is $\omega$-subharmonic on graphs. Some preparation is needed. Let $s$ be a non-vanishing holomorphic section of $L^k$ over an open set $Y\subset X$. Let $e^{-k\phi}:=h^k(s,s)$ and $s^*_k:Y \to (L^k)^*$ be defined by $s^*_k(x)(\cdot)=h^k(\cdot,e^{k\phi(x)/2}s(x))$ for $x\in Y$. Suppose $\hat s^*_k: Y\to H^0(X,L^k)^*$ is the pointwise evaluation map of $s^*_k$, namely $\hat s^*_k(x)(\sigma):=s^*_k(x)(\sigma(x))$ for $\sigma\in H^0(X,L^k)$. Then we have the following formula, which is taken from \cite[Lemma 4.1]{DW19},
\begin{equation}
 FS_k ((V^{k}_z)^*)(x)= \frac{2}{k} \log\big[ V^{k}_z (\hat{s}^*_k(x))\big], \ x \in Y.
\end{equation}
Meanwhile, for $\sigma\in H^0(X,L^k)$, $e^{k\phi(x)/2}\hat s^*_k(x) (\sigma)=\sigma(x)/s(x)$ 
is holomorphic, so $e^{k\phi/2}\hat s^*_k$ is holomorphic. Hence for any holomorphic map $g$ from an open subset of $D$ to $X$
\begin{equation}\label{3}
  \Delta\big(\phi(g(z))+FS_k ((V^{k}_z)^*)(g(z))\big)=\Delta(\frac{1}{k} \log\big[ V^{k}_z ((e^{k\phi/2}\hat s^*_k)\circ g(z)) \big]^2).  
\end{equation}
By \cite[Theorem 4.1]{CS93} the Hermitian--Yang--Mills metric $V^{k}_z$ is a subharmonic norm function, so the last term of (\ref{3}) is nonnegative, which means $FS_k ((V^{k})^*)$ is $\omega$-subharmonic on graphs as we claimed. Further, according to the Tian--Catlin--Zelditch asymptotic theorem or by \cite[Lemma 4.10]{DW19} an easier but cruder estimate, $FS_k ((V^{k}_z)^*|_{\partial D})=FS_k (H_k(v))\leq v+O(\log k/k)$, so $FS_k ((V^{k})^*)\in G_{v+O(\log k /k)}$ and $FS_k ((V^{k})^*) \leq V+O(\log k/k)$. This last inequality together with (\ref{lower}) concludes the proof.
\end{proof}
It is natural to ask if $V$ belongs to $G_v$. A standard approach to show the envelope belongs to a family is to take upper regularization, and the case at hand is very similar to \cite[Lemma 11.11]{CS93}, where upper regularization is taken in the $z$-variables. The reason it works in their lemma  is because their function in the $x$-variables is a norm, but ours is not and regularization does not seem to work. However, with Theorem \ref{thm:YM approx} one can easily show $V\in G_v$. It would be interesting to prove $V\in G_v$ directly without using Theorem \ref{thm:YM approx}, after all $G_v$ and $V$ can be defined on any K\"ahler manifold $(X,\omega)$ without reference to a line bundle.
\begin{coro}\label{cor stable}
The upper envelope $V$ is continuous and $V\in G_v$.
\end{coro}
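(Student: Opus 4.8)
The plan is to deduce everything from Theorem \ref{thm:YM approx}, which exhibits $V$ as the uniform limit on $D\times X$ of the metrics $FS_k((V^{k})^*)$; so it suffices to establish two facts about these approximants, that each is continuous and that each is $\omega$-subharmonic on graphs, and then observe that both properties pass to uniform limits. The second fact was already recorded in the proof of Theorem \ref{thm:YM approx}, so the only new point is continuity.

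For continuity I would use the local formula from the proof of Theorem \ref{thm:YM approx}: over the open set $Y\subset X$ on which a holomorphic section $s$ of $L^k$ is non-vanishing, $FS_k((V^{k}_z)^*)(x)=\tfrac{2}{k}\log\big[V^{k}_z(\hat s^*_k(x))\big]$. Here $z\mapsto V^{k}_z$ is a continuous (indeed smooth on $D$, by interior elliptic regularity for the Hermitian--Yang--Mills system) family of positive-definite Hermitian forms on the fixed finite-dimensional space $H^0(X,L^k)^*$, while $x\mapsto \hat s^*_k(x)$ is continuous and, for $k$ large, nowhere zero since $L^k$ is base-point free. Hence $(z,x)\mapsto V^{k}_z(\hat s^*_k(x))$ is a strictly positive, jointly continuous function on $D\times Y$, and so is its logarithm; covering $X$ by finitely many such $Y$ (the left-hand side being intrinsically defined, the local expressions agree on overlaps) shows $FS_k((V^{k})^*)\in C(D\times X)$. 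A uniform limit of continuous functions being continuous, Theorem \ref{thm:YM approx} gives $V\in C(D\times X)$; moreover Lemma \ref{envelope} applied with $\nu=v$ yields $\lim_{(z,x)\to(z_0,x_0)\in\partial D\times X}V(z,x)=v(z_0,x_0)$, so $V$ in fact extends continuously to $\overline{D}\times X$.

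It remains to check $V\in G_v$. The $\limsup$ boundary condition is immediate from the boundary limit just recorded, and continuity gives that $V$ is usc. For $\omega$-subharmonicity on graphs, fix a holomorphic map $f$ from an open subset $W$ of $D$ to $X$ and a local potential $\psi$ of $\omega$: by the claim established in the proof of Theorem \ref{thm:YM approx}, $z\mapsto \psi(f(z))+FS_k((V^{k}_z)^*)(f(z))$ is subharmonic on $W$ for every $k$, and by Theorem \ref{thm:YM approx} these converge uniformly on $W$ to $z\mapsto\psi(f(z))+V(z,f(z))$; since a uniform (even a decreasing) limit of subharmonic functions is subharmonic, $V$ is $\omega$-subharmonic on graphs. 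Thus $V\in G_v$.

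What is worth emphasizing is that there is essentially no hard step left here: the difficulty flagged before the statement — that the usual upper-semicontinuous-regularization argument fails because $V$ is not a norm in the $x$-variables — has already been absorbed into Theorem \ref{thm:YM approx}. The only mildly technical ingredient is the (standard) interior regularity of the Hermitian--Yang--Mills metrics $V^{k}$, and even plain continuity of $V^{k}$ on $D$ would suffice for the argument above.
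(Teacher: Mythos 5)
Your proof is correct, but the second half takes a genuinely different route from the paper. For the membership $V\in G_v$ the paper does not pass through the approximants at all: once continuity of $V$ is known (from Theorem \ref{thm:YM approx}), it observes that for any holomorphic $f$ and local potential $\psi$ the function $\psi(f(z))+V(z,f(z))$ is the pointwise supremum over $u\in G_v$ of the subharmonic functions $\psi(f(z))+u(z,f(z))$, and a supremum of subharmonic functions which is itself continuous (in particular usc) is subharmonic; the boundary condition then comes from Lemma \ref{envelope}, exactly as in your argument. You instead transport $\omega$-subharmonicity on graphs through the uniform convergence $FS_k((V^k)^*)\to V$, using the claim from the proof of Theorem \ref{thm:YM approx} that each $FS_k((V^k)^*)$ is $\omega$-subharmonic on graphs; this is equally valid, since uniform limits of subharmonic functions are subharmonic, but it leans a second time on the quantization machinery where the paper only needs the envelope definition of $V$. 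On the first half you actually supply more detail than the paper: the paper declares continuity of $V$ a ``direct consequence'' of Theorem \ref{thm:YM approx}, which tacitly uses continuity of the approximants $FS_k((V^k)^*)$; your verification via the formula $FS_k((V^k_z)^*)(x)=\tfrac{2}{k}\log\big[V^k_z(\hat s^*_k(x))\big]$, continuity of $z\mapsto V^k_z$, and base-point freeness of $L^k$ for large $k$ is a legitimate way to fill that in. Net effect: the paper's route is marginally more self-contained for the $G_v$-membership, while yours makes the continuity step fully explicit; both are correct.
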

\begin{proof}
   The first statment is a direct consequence of Theorem \ref{thm:YM approx}. As to the second statement, let $\psi$ be a local potential of $\omega$ and $f$ a holomorphic map from an open subset of $D$ to $X$. For any $u\in G_v$, $\psi(f(z))+u(z,f(z))$ is subharmonic; hence $\psi(f(z))+V(z,f(z))$, the supremum over $u\in G_v$,  is also subharmonic since $V$ is continuous. By Lemma \ref{envelope}, it follows $V\in G_v$.
\end{proof}

%We define the usc regularization of $V$ in the $z$-variables, i.e. $$V^*(z,x):=\limsup_{w\to z}V(w,x).$$ From Lemma \ref{envelope}, $V(z,x_0)\leq h_0(z)$ a harmonic function with boundary values $v(z,x_0)$, so $V^*(z,x_0)\leq h_0(z)$ and hence $\limsup_{z\to \partial D}V^*(z,x_0)\leq v(z,x_0)$. Suppose $\phi$ is a local potential of $\omega$ and $f$ a holomorphic function from $D$ to $X$, then \begin{equation}\label{incomplete}      \limsup_{w\to z}\phi(f(w))+V(w,f(w))=\phi(f(z))+\limsup_{w\to z}V(w,f(w))=\phi(f(z))+V^*(z,f(z)\end{equation}   the first term, hence the last term, is subharmonic. We see $V^*\in G_v$ and $V\in G_v$.  \textcolor{blue}{(I cheated in (\ref{incomplete}), the last equality in (\ref{incomplete}) is true if we assume $V(z,x)$ is continuous, which I have not been able to prove. This Lemma is used only once later.)}

\section{The WZW equation}\label{sec har}

We will prove Theorem \ref{thm har} and compute the Euler--Lagrange equation of $\mathscr{E}$ in this section. We begin with an observation. Suppose $u$ is a $C^2$ function on $D\times X$ and $\psi$ is a local potential of $\omega$. Consider the complex Hessian of $u+\psi$ with respect to a fixed coordinate $z_j$ in $D$ and local coordinates $x$ in $X$ where $\psi$ is defined
\begin{equation}\label{matrix}
\left (
\begin{array}{cccc}
(u+\psi)_{z_j\bar{z}_j}
& (u+\psi)_{z_j\bar{x}_1} & \cdots &  (u+\psi)_{z_j\Bar{x}_n}\\
(u+\psi)_{x_1\bar{z}_j} & (u+\psi)_{x_1\bar{x}_1} & \cdots & (u+\psi)_{x_1\bar{x}_n}\\
 \vdots & \vdots & \ddots & \vdots \\
(u+\psi)_{x_n\bar{z}_j}& (u+\psi)_{x_n\bar{x}_1} &\cdots & (u+\psi)_{x_n\bar{x}_n} 
\end{array}
\right ),
\end{equation}
which we will denote by $(u+\psi)_j$. Then
\begin{equation}\label{invariant}
\begin{aligned}
  &(i\partial \bar{\partial}u+\pi^*\omega)^{n+1}\wedge(i\sum_{j=1}^m dz_j\wedge d\bar{z}_j)^{m-1}\\
  =&(n+1)! (m-1)!\sum^m_{j=1}\det (u+\psi)_j  \big(\bigwedge^m_{k=1} i dz_k\wedge d\Bar{z}_k\wedge \bigwedge^n_{k=1} i dx_k\wedge d\Bar{x}_k\big).  
\end{aligned}
\end{equation}

\begin{lemma}\label{+ det}
  Suppose $u$ is a $C^2$ function on $D\times X$ and $\omega +i\partial\Bar{\partial}u(z,\cdot)>0$ on $X$ for all $z\in D$.  Then $u$ is $\omega$-subharmonic on graphs if and only if $$(i\partial \bar{\partial}u+\pi^*\omega)^{n+1}\wedge(i\sum_{j=1}^m dz_j\wedge d\bar{z}_j)^{m-1} \geq 0.$$
\end{lemma}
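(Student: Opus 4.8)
The plan is to read off the sign of the $(n+m,n+m)$-form directly from the function $K$ that already surfaced in the proof of Theorem \ref{thm bo bundle}. By the identity (\ref{invariant}), the form $(i\partial\bar\partial u+\pi^*\omega)^{n+1}\wedge(i\sum_{j=1}^m dz_j\wedge d\bar z_j)^{m-1}$ is nonnegative exactly when $\sum_{j=1}^m\det(u+\psi)_j\ge 0$ at every point of $D\times X$. So I would fix $(z_0,x_0)$, pick local coordinates on $X$ near $x_0$ with $\omega=i\partial\bar\partial\psi$, and set $\phi=\psi+u$. In the Hermitian matrix $(u+\psi)_j$ of (\ref{matrix}) the lower right $n\times n$ block is $(\phi_{\mu\bar\lambda})$, the local matrix of $\omega+i\partial\bar\partial u(z_0,\cdot)$, which is positive definite by hypothesis, so the Schur complement (block determinant) formula gives
\[
\det(u+\psi)_j=\det(\phi_{\mu\bar\lambda})\Big(u_{j\bar j}-\sum_{\mu,\lambda}(\psi+u)^{\bar\lambda\mu}u_{j\bar\lambda}u_{\bar j\mu}\Big),
\]
and summing over $j$ yields $\sum_j\det(u+\psi)_j=\det(\phi_{\mu\bar\lambda})\,K$, with $K$ exactly the function appearing in the proof of Theorem \ref{thm bo bundle}. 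Since $\det(\phi_{\mu\bar\lambda})>0$, the form is nonnegative if and only if $K\ge 0$ throughout $D\times X$.

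It then remains to show that, under the standing assumption $\omega+i\partial\bar\partial u(z,\cdot)>0$, $u$ is $\omega$-subharmonic on graphs if and only if $K\ge 0$. The forward direction is already done: it is precisely the argument inside the proof of Theorem \ref{thm bo bundle}, where subharmonicity of $\psi(f(z))+u(z,f(z))=\phi(z,f(z))$ gives (\ref{sh}) and the affine competitor $f^\lambda=-\sum_i\phi_{i\bar\lambda}z_i$ (in coordinates normalized so that $(\phi_{\mu\bar\lambda})$ is the identity at the point) forces each summand of $K$ to be $\ge 0$. For the converse I would take an arbitrary holomorphic map $f$ from an open subset of $D$ to $X$; the chain rule (using $\partial f^\mu/\partial\bar z_i=0$) shows that $\Delta\big(\psi(f(z))+u(z,f(z))\big)$ equals the left-hand side of (\ref{sh}). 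Grouping that expression by the index $i$, the terms carrying a fixed $i$ form a real quadratic function of $(\partial f^\mu/\partial z_i)_\mu\in\mathbb C^n$ whose pure second-order part is the positive-definite form $(\phi_{\mu\bar\lambda})$; completing the square, its infimum over $\mathbb C^n$ is the Schur complement above, i.e.\ the $i$-th summand of $K$. Hence $\Delta(\phi(z,f(z)))\ge K\ge 0$, so $\phi(z,f(z))$ is subharmonic, and since $f$ was arbitrary $u$ is $\omega$-subharmonic on graphs, completing the equivalence.

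I do not anticipate a serious obstacle here: the proof is essentially the Schur complement bookkeeping, together with matching it — under the index conventions of Theorem \ref{thm bo bundle} for $(\psi+u)^{\bar\lambda\mu}$ — to the determinant expansion (\ref{invariant}) and to the completed square of the chain-rule Laplacian. The one point that needs a little care is checking that the pointwise infimum over first-order jets of $f$ of that Laplacian is actually attained by an honest holomorphic competitor (an affine map in the $X$-coordinates, which stays inside the chart for $z$ near $z_0$), so that the equivalence with subharmonicity on graphs is genuinely two-sided rather than just an inequality.
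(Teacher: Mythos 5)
Your proposal is correct and follows essentially the same route as the paper: reduce via (\ref{invariant}) to the sign of $\sum_j\det(u+\psi)_j$, identify that sum (divided by $\det(\psi_{\mu\bar\lambda}+u_{\mu\bar\lambda})>0$) with the Schur-complement quantity $K$, and observe that the chain-rule Laplacian $\Delta(\psi(f(z))+u(z,f(z)))$ is, after completing the square in $(\partial f^\mu/\partial z_i)$, equal to $K$ plus a sum of squares, with the infimum attained by the affine competitor. This is exactly the paper's argument (displays (\ref{15})--(\ref{16}) and the claim $K\ge 0$ in the proof of Theorem \ref{thm bo bundle}), so no further comment is needed.
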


\begin{proof}
Let $\psi$ be a local potential of $\omega$ and denote the complex Hessian of $u+\psi$ with respect to $z_j$ and $x$ by $(u+\psi)_j$, as in the matrix (\ref{matrix}). Due to (\ref{invariant}), we will focus on $\sum^m_{j=1}\det (u+\psi)_j$.

 Let $f$ be a holomorphic function from an open subset of $D$ to $X$, then in a coordinate system on $X$ 
\begin{align*}
&\Delta(\psi(f(z))+u(z,f(z)))=\\
    &\sum_{i ,\lambda, \mu}\psi_{ \mu\Bar{\lambda}}\frac{\partial f^{\mu}}{\partial z_i}\frac{\partial \Bar{f^{\lambda}}}{\partial \Bar{z_i}}+\sum_i u_{i\Bar{i}}+\sum_{i ,\lambda}u_{i \Bar{\lambda}}\frac{\partial \Bar{f^{\lambda}}}{\partial \Bar{z_i}}+\sum_{i,\mu}u_{\Bar{i}\mu}\frac{\partial f^{\mu}}{\partial z_i}+\sum_{i, \lambda, \mu}u_{ \mu\Bar{\lambda}}\frac{\partial f^{\mu}}{\partial z_i}\frac{\partial \Bar{f^{\lambda}}}{\partial \Bar{z_i}}. 
\end{align*} 
If we denote the matrix $(\psi_{\mu\Bar{\lambda}}+u_{\mu\Bar{\lambda}})$ by $A$ and the column vector $(u_{i\bar{\lambda}})$ by $B_i$, then the above is the same as 
\begin{equation}
    \sum_i \big( \langle A\frac{\partial f}{\partial z_i}, \frac{\partial f}{\partial z_i}\rangle+\langle B_i,\frac{\partial f}{\partial z_i}  \rangle+\overline{\langle B_i,\frac{\partial f}{\partial z_i}  \rangle}+u_{i\bar{i}} \big),
\end{equation}
where the angled inner product is the usual Euclidean inner product and $\partial f/\partial z_i$ is the column vector $(\partial f^{\mu}/\partial z_i)$. The matrix form can be further written as 
\begin{equation}\label{15}
    \sum_i \big(\|\sqrt{A}\frac{\partial f}{\partial z_i}+\sqrt{A}^{-1}B_i\|^2-\|\sqrt{A}^{-1}B_i\|^2+u_{i\Bar{i}}\big).
\end{equation}
 Notice that 
 \begin{equation}\label{16}
 \begin{aligned}
 \sum_i (-\|\sqrt{A}^{-1}B_i\|^2+u_{i\Bar{i}})&=\sum_i(u_{i\Bar{i}}-\langle A^{-1}B_i,B_i\rangle)=\sum_i(u_{i\Bar{i}}-\sum_{\lambda, \mu}u_{i\bar{\lambda}}(\psi+u)^{\bar{\lambda} \mu}u_{\Bar{i}\mu})\\
 &=\sum_i\frac{\det(u+\psi)_i}{\det (\psi_{\mu\Bar{\lambda}}+u_{\mu\Bar{\lambda}})}, 
 \end{aligned}
 \end{equation}
where the last equality can be deduced from Schur's formula for determinants of block matrices as follows (see also \cite{Sem92} and \cite{Bern09} for a different computation). We examine the complex Hessian of $u+\psi$ 
\[(u+\psi)_j=
\left (
\begin{array}{cccc}
(u+\psi)_{z_j\bar{z}_j}
& (u+\psi)_{z_j\bar{x}_1} & \cdots &  (u+\psi)_{z_j\Bar{x}_n}\\
(u+\psi)_{x_1\bar{z}_j} & (u+\psi)_{x_1\bar{x}_1} & \cdots & (u+\psi)_{x_1\bar{x}_n}\\
 \vdots & \vdots & \ddots & \vdots \\
(u+\psi)_{x_n\bar{z}_j}& (u+\psi)_{x_n\bar{x}_1} &\cdots & (u+\psi)_{x_n\bar{x}_n} 
\end{array}
\right ), 
\]
and the Schur complement of the trailing $n\times n$ minor $((u+\psi)_{\mu\Bar{\lambda}})$ is precisely $u_{j\Bar{j}}-\sum_{\lambda, \mu}u_{j\Bar{\lambda}}(u+\psi)^{\Bar{\lambda} \mu}u_{\Bar{j}\mu}$, which is also equal to $\det (u+\psi)_j/\det ((u+\psi)_{\mu\Bar{\lambda}})$ by Schur's formula, for example see \cite{HZ05}. 

$u$ is $\omega$-subharmonic on graphs if and only if (\ref{15}) is nonnegative for any holomorphic maps $f$, and it is equivalent to the last term in (\ref{16}) being nonnegative. The lemma follows since the matrix $(\psi_{\mu\Bar{\lambda}}+u_{\mu\Bar{\lambda}})$ is positive and the equality (\ref{invariant}). 
\end{proof}
From (\ref{invariant}) and (\ref{16}), the function $K$ in the proof of Theorem \ref{thm bo bundle} has the following invariant expression 
\begin{equation*}
    K=\frac{m!n!}{(m-1)!(n+1)!}\frac{(\pi^*\omega+i\partial \bar{\partial}u)^{n+1}\wedge(i\sum_{j=1}^m dz_j\wedge d\bar{z}_j)^{m-1}}{(\omega+i\partial \bar{\partial}u)^n\wedge (i\sum_{j=1}^m dz_j\wedge d\bar{z}_j)^{m} }, 
\end{equation*}
and one can see $K\geq 0$ if $u$ is $\omega$-subharmonic on graphs.

\begin{proof}[Proof of Theorem \ref{thm har}]
 By the equality (\ref{invariant}), the equation $$(i\partial \bar{\partial}V+\pi^*\omega)^{n+1}\wedge(i\sum_{j=1}^m dz_j\wedge d\bar{z}_j)^{m-1}=0$$ is equivalent to $\sum_j\det(\psi+V)_j=0$, so we will prove the latter equation.
  
 By Corollary \ref{cor stable}, $V$ is $\omega$-subharmonic on graphs, and hence $V(z,x)$ is $\omega$-psh on $X$ by Lemma \ref{lemma psh in X}. Take a coordinate chart $\Omega$ of $X$, then for $\varepsilon>0$ and $x\in \Omega$,  the function $V(z,x)+\varepsilon|x|^2$ satisfies the assumption of Lemma \ref{+ det}, so $\sum_i\det(\psi+V+\varepsilon|x|^2)_i\geq 0$ and $\sum_i\det(\psi+V)_i\geq 0$. 
 
 Suppose $\sum_i\det(\psi+V)_i$ is positive at a point $p$ in $D\times X$. We may assume $\det(\psi+V)_1$ is positive at $p$, so it is positive in a neighborhood $B$ of $p$ in $D\times X$. For small $\varepsilon>0$, $\det (\psi+V+\varepsilon|x|^2)_1>0$ on $B$, then by Sylvester's criterion for positive matrices or a property of Schur complement for positive matrices (See \cite[Theorem 1.12]{HZ05}) we deduce that the matrix $(\psi+V+\varepsilon|x|^2)_1$ is positive on $B$, so the matrix $(\psi+V)_1$ is semi-positive on $B$, but since $\det(\psi+V)_1$ is positive on $B$, $(\psi+V)_1$ is actually positive on $B$; in particular, the $n\times n$ trailing minor $(\psi_{\lambda\Bar{\mu}}+V_{\lambda\Bar{\mu}})$ is positive on $B$. 
 
 So if we pick a suitably small smooth cutoff function $\rho$ supported in $B$, then the function $V+\rho$ satisfies the assumption of Lemma \ref{+ det} on $B$, and hence $V+\rho$ is $\omega$-subharmonic on graphs  and is in $G_v$, which contradicts $V=\sup G_v$. Therefore, $\sum_j\det(\psi+V)_j= 0$.
\end{proof}

As in the introduction, $\theta$ on $\mathcal{H}_\omega$ is
\begin{equation}
  \theta(\xi_1,\xi_2,\xi_3):=g_M(\{\xi_1,\xi_2\}_{\omega_\phi},\xi_3)=\int_X\{\xi_1,\xi_2\}_{\omega_\phi}\xi_3 \omega_\phi^n,  
\end{equation}
where $\phi\in \mathcal H_\omega$ and $ \xi_1,\xi_2,\xi_3 \in T_\phi\mathcal H_\omega$. According to $\{\xi_1,\xi_2\}_{\omega_\phi} \omega_\phi^n=nd\xi_1\wedge d\xi_2 \wedge \omega_\phi^{n-1}$ and an integration by parts, we deduce $\int_X\{\xi_1,\xi_2\}_{\omega_\phi}\xi_3 \omega_\phi^n=\int_X \xi_1 \{ \xi_2,\xi_3\}_{\omega_\phi} \omega_\phi^n$, and therefore $\theta$ is indeed skew-symmetric and a three form. The rest of this section is devoted to showing that the three form $\theta$ is $d$-closed, and the derivation of Euler--Lagrange equation of $\mathscr{E}$. 

\begin{lemma}\label{d formula}
  Let $\beta$ be a $k$-form on $\mathcal H_\omega$, and let $\xi_0,...\xi_k$ be vector fields on $\mathcal H_\omega$, which are constant in the canonical trivialization $T\mathcal{H}_\omega \approx \mathcal H_\omega \times C^\infty (X)$. Then 
  \begin{align}
      d\beta (\xi_0,...\xi_k)=\sum^k_{j=0} (-1)^j \xi_j\beta(\xi_0,...,\hat{\xi_j},...,\xi_k),
  \end{align}
  where $\hat{\xi_j}$ means $\xi_j$ is to be omitted. (This formula is ture if $\mathcal H_\omega\subset C^\infty(X)$ is replaced by an open subset of a Fr\'echet space.)
\end{lemma}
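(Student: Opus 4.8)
The plan is to reduce the statement to the definition of the exterior derivative on an open subset of a Fr\'echet space. Via the canonical trivialization $T\mathcal{H}_\omega \approx \mathcal{H}_\omega \times C^\infty(X)$ one regards $\mathcal{H}_\omega$ as an open subset $U$ of the Fr\'echet space $F = C^\infty(X,\mathbb{R})$; it is open because the condition $\omega + i\partial\bar\partial\phi > 0$ is open in the $C^\infty$ topology. A smooth $k$-form $\beta$ on $U$ is then a smooth assignment $\phi \mapsto \beta_\phi$ of a continuous alternating $k$-linear form on $F$ to each point $\phi \in U$, and its exterior derivative is, by definition,
\begin{equation*}
(d\beta)_\phi(v_0,\dots,v_k) = \sum_{j=0}^k (-1)^j \big(D_{v_j}\beta\big)_\phi(v_0,\dots,\widehat{v_j},\dots,v_k),
\end{equation*}
where $D_v$ denotes differentiation in the direction $v\in F$, so that $(D_v\beta)_\phi$ is again a continuous alternating $k$-linear form on $F$. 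I would first recall that this $d$ is well defined and behaves formally like the finite-dimensional exterior derivative (it is $\mathbb{R}$-linear and alternating and satisfies $d^2=0$); in the convenient/Fr\'echet calculus these facts are standard, and they are the only structural properties of $d$ that are needed.

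The rest is a direct evaluation. Let $\xi_0,\dots,\xi_k$ be the vector fields that are constant in the trivialization, say $\xi_j\equiv v_j\in F$. For each $j$ the function $\phi\mapsto \beta(\xi_0,\dots,\widehat{\xi_j},\dots,\xi_k)(\phi)=\beta_\phi(v_0,\dots,\widehat{v_j},\dots,v_k)$ has arguments that do not depend on $\phi$, so its derivative at $\phi$ in the direction $\xi_j(\phi)=v_j$ is precisely $\big(D_{v_j}\beta\big)_\phi(v_0,\dots,\widehat{v_j},\dots,v_k)$. Hence $\xi_j\,\beta(\xi_0,\dots,\widehat{\xi_j},\dots,\xi_k)$ at $\phi$ is exactly the $j$-th term in the displayed formula, and summing over $j$ with signs $(-1)^j$ gives the claim. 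Equivalently, one may compare with the intrinsic Cartan formula; the correction terms $\sum_{i<j}(-1)^{i+j}\beta([\xi_i,\xi_j],\dots)$ drop out because vector fields that are constant in a linear trivialization have vanishing Lie bracket.

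I do not expect a genuine obstacle here: the content is unwinding definitions, and the only point requiring care is that the underlying manifold is infinite-dimensional, so one must commit to a convention for $d$ on an open subset of a Fr\'echet space and know it obeys the usual formalism. If one prefers to avoid infinite-dimensional calculus, an alternative route is to pull back to finite dimensions: fix $\phi_0\in\mathcal{H}_\omega$ and $v_0,\dots,v_k$, consider the affine map $\iota(t_0,\dots,t_k)=\phi_0+\sum_j t_jv_j$ from a neighbourhood of $0$ in $\mathbb{R}^{k+1}$ into $\mathcal{H}_\omega$, apply the classical Cartan formula to the ordinary $k$-form $\iota^*\beta$ with the (commuting) coordinate fields $\partial/\partial t_j$, and then use $d(\iota^*\beta)=\iota^*(d\beta)$ together with $\iota_*(\partial/\partial t_j)=\xi_j$ to read off the identity at $\phi_0$; since $\phi_0$ is arbitrary, it holds on all of $\mathcal{H}_\omega$.
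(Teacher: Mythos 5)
Your argument is correct and takes essentially the same route as the paper: the paper's proof is the single observation that the standard Cartan formula for $d\beta$ applies and that the Lie-bracket terms vanish because the $\xi_j$ are constant in the linear trivialization, which is exactly the remark you make at the end of your second paragraph. Your explicit unwinding of the definition of $d$ on an open subset of a Fr\'echet space (and the finite-dimensional pull-back alternative) merely spells out what the paper treats as well known.
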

\begin{proof}
This is a well-known formula except there should be terms involving Lie brackets on the right hand side, but since $\xi_j$ are constant vector fields, their Lie brackets are zero. 
\end{proof}

\begin{lemma}\label{theta is closed}
  The three-form $\theta$ is $d$-closed.
\end{lemma}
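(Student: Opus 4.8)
The plan is to use the formula from Lemma~\ref{d formula} and reduce the claim $d\theta=0$ to a pointwise identity for Poisson brackets at a fixed $\phi\in\mathcal H_\omega$. First I would take four constant vector fields $\xi_0,\xi_1,\xi_2,\xi_3$ on $\mathcal H_\omega$ (in the canonical trivialization $T\mathcal H_\omega\approx\mathcal H_\omega\times C^\infty(X)$) and write
\begin{align*}
d\theta(\xi_0,\xi_1,\xi_2,\xi_3)=\sum_{j=0}^3(-1)^j\,\xi_j\,\theta(\xi_0,\dots,\hat\xi_j,\dots,\xi_3).
\end{align*}
Each term $\xi_j\theta(\dots)$ is the directional derivative of a functional on $\mathcal H_\omega$ in the constant direction $\xi_j$; concretely, $\theta(\eta_1,\eta_2,\eta_3)(\phi)=\int_X\{\eta_1,\eta_2\}_{\omega_\phi}\eta_3\,\omega_\phi^n$ depends on $\phi$ only through $\omega_\phi=\omega+i\partial\bar\partial\phi$, so I would compute $\tfrac{d}{dt}\big|_{t=0}\theta(\cdots)(\phi+t\xi_j)$ by differentiating both the Poisson bracket and the volume form $\omega_\phi^n$.

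The key computational input is the variation of the Poisson bracket: using $\{\eta_1,\eta_2\}_{\omega_\phi}\omega_\phi^n=n\,d\eta_1\wedge d\eta_2\wedge\omega_\phi^{n-1}$, differentiating in the direction $\chi$ (so $\delta\omega_\phi=i\partial\bar\partial\chi$) gives a manageable expression, and crucially the combination $\{\eta_1,\eta_2\}_{\omega_\phi}\omega_\phi^n$ is a form-valued quantity for which integration by parts is clean. I expect that after expanding all eight directional-derivative terms and repeatedly integrating by parts (using the symmetry $\int_X\{\eta_1,\eta_2\}\eta_3\,\omega_\phi^n=\int_X\eta_1\{\eta_2,\eta_3\}\,\omega_\phi^n$ already noted in the text, and the Jacobi identity for the Poisson bracket), everything cancels. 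A cleaner route, which I would try first, is to recognize $\theta$ as (a multiple of) the pullback under $\phi\mapsto\omega_\phi$ of a natural closed form, or to invoke the Jacobi identity directly: the terms where $\xi_j$ differentiates the bracket $\{\cdot,\cdot\}_{\omega_\phi}$ combine, via Jacobi, with the terms where it differentiates the volume form, and the antisymmetrization over the four indices kills the remainder.

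The main obstacle will be bookkeeping: there are four terms in $d\theta$, each producing two pieces (differentiation of the bracket vs. differentiation of $\omega_\phi^n$), so twelve-odd integrals must be organized and shown to cancel in antisymmetrized pairs. The conceptual risk is that the variation of $\{\eta_1,\eta_2\}_{\omega_\phi}$ with respect to $\omega_\phi$ is not simply another Poisson bracket, so one has to carry a genuine correction term through the integration by parts and verify it cancels against the correction coming from $\delta(\omega_\phi^n)=n\,i\partial\bar\partial\chi\wedge\omega_\phi^{n-1}$. I would handle this by working throughout with differential forms on $X$ rather than functions, writing each integrand as a top-degree form, so that Stokes' theorem applies transparently and the closedness reduces to the purely algebraic Jacobi identity together with the skew-symmetry already established; the antisymmetry of $d\theta$ in $\xi_0,\dots,\xi_3$ then forces the vanishing.
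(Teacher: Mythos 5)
Your set-up matches the paper's: constant vector fields, the formula of Lemma \ref{d formula} for $d\theta$, and the identity $\{\xi_1,\xi_2\}_{\omega_\phi}\,\omega_\phi^n=n\,d\xi_1\wedge d\xi_2\wedge\omega_\phi^{n-1}$ are exactly the right ingredients. But the heart of the lemma --- showing that the four directional-derivative terms actually cancel --- is left as an expectation (``I expect that after expanding \dots everything cancels''), and the mechanism you plan to use (differentiate the Poisson bracket and the volume form separately, then cancel the correction terms via the Jacobi identity) is never executed. That is a genuine gap: as written there is no proof, only a plausible plan whose main risk you yourself flag (the variation of $\{\cdot,\cdot\}_{\omega_\phi}$ in $\phi$ is not another bracket) without resolving it.

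The missing idea, which also makes the Jacobi identity and all the bookkeeping unnecessary, is to use the cyclic symmetry of $\theta$ \emph{before} differentiating: write
$\theta(\xi_2,\xi_3,\xi_4)=\theta(\xi_3,\xi_4,\xi_2)=\int_X \xi_2\, n\, d\xi_3\wedge d\xi_4\wedge\omega_\phi^{n-1}$,
so that the variable you are about to differentiate in appears nowhere inside a bracket and the only $\phi$-dependence is through $\omega_\phi^{n-1}$. Then
\begin{equation*}
\xi_1\theta(\xi_2,\xi_3,\xi_4)=\int_X \xi_2\, n\, d\xi_3\wedge d\xi_4\wedge (n-1)\,i\partial\bar\partial\xi_1\wedge\omega_\phi^{n-2},
\end{equation*}
and a single integration by parts (moving $i\partial\bar\partial$ from $\xi_1$ to $\xi_2$) gives the symmetry $\xi_1\theta(\xi_2,\xi_3,\xi_4)=\xi_2\theta(\xi_1,\xi_3,\xi_4)$; the alternating sum in Lemma \ref{d formula} then vanishes pairwise. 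So you never have to vary the Poisson bracket at all, and no Jacobi identity enters. Your sketch gestures toward this (``work with top-degree forms''), but without isolating the $\phi$-dependence in $\omega_\phi^{n-1}$ and establishing the symmetry above, the cancellation you need remains unproved.
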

\begin{proof}
This is similar to the derivation of the Aubin--Yau functional and the Mabuchi energy (see e.g. \cite[Sectioin 4]{Blo13}). Consider four vector fields $ \xi_1,\xi_2,\xi_3, \xi_4$ on $\mathcal H_\omega$, which are constant in the canonical trivialization $T\mathcal{H}_\omega \approx \mathcal H_\omega \times C^\infty (X) $. By Lemma \ref{d formula} 
 \begin{equation}\label{4 form}
     d\theta(\xi_1,\xi_2,\xi_3,\xi_4)=\xi_1\theta(\xi_2,\xi_3,\xi_4)-\xi_2\theta(\xi_1,\xi_3,\xi_4)+\xi_3\theta(\xi_1,\xi_2,\xi_4)-\xi_4\theta(\xi_1,\xi_2,\xi_3).
 \end{equation}
Using $\{\xi_3,\xi_4\}_{\omega_\phi} \omega_\phi^n=nd\xi_3\wedge d\xi_4 \wedge \omega_\phi^{n-1}$ and $  \frac{d}{dt}\Bigr|_{t=0} \omega_{\phi+t\xi_1}^{n-1} =(n-1)i\partial\Bar{\partial}\xi_1\wedge\omega_\phi^{n-2}$,  
 \begin{align*}
     \xi_1\theta(\xi_2,\xi_3,\xi_4)&=\xi_1\theta(\xi_3,\xi_4,\xi_2)=d (\theta(\xi_3,\xi_4,\xi_2))(\xi_1)=\frac{d}{dt}\Bigr|_{t=0} \theta(\xi_3,\xi_4,\xi_2)(\phi+t\xi_1)\\
     &= \frac{d}{dt}\Bigr|_{t=0} \int_X\{\xi_3,\xi_4\}_{\omega_{\phi+t\xi_1}}\xi_2 \omega_{\phi+t\xi_1}^n\\
     &=\frac{d}{dt}\Bigr|_{t=0} \int_X\xi_2 nd\xi_3\wedge d\xi_4 \wedge \omega_{\phi+t\xi_1}^{n-1}\\
     &=\int_X\xi_2 nd\xi_3\wedge d\xi_4 \wedge (n-1)i\partial\Bar{\partial}\xi_1\wedge\omega_\phi^{n-2}\\
     &=\int_X\xi_1 nd\xi_3\wedge d\xi_4 \wedge (n-1)i\partial\Bar{\partial}\xi_2\wedge\omega_\phi^{n-2}=\xi_2\theta(\xi_1,\xi_3,\xi_4),
     \end{align*}
 where the second to last equality is due to integration by parts. Because of this symmetry in index, (\ref{4 form}) is 0 and therefore $d\theta=0$. 
\end{proof}

Since $\theta$ is $d$-closed, there exists a two-form $\alpha$ on $\mathcal H_\omega$ such that $d\alpha=\theta$. For a map $\Phi: D\to \mathcal{H}_\omega$, the derivative $\Phi_{{z_j}}=1/2(\Phi_{\operatorname{Re}z_j}-i\Phi_{\operatorname{Im}z_j})$ is a section of $\mathbb{C}\otimes T\mathcal H_\omega$ along $\Phi$, and $\alpha(\Phi_{\bar{z}_j},\Phi_{{z_j}})$ is a function on $D$. We define $\mathscr{E}$ by $$\mathscr{E}(\Phi):=E(\Phi)+4 i\sum_j\int_D\alpha (\Phi_{\bar{z}_j},\Phi_{{z_j}})dV=\int_D |\Phi_*|^2 dV+4 i\sum_j\int_D\alpha (\Phi_{\bar{z}_j},\Phi_{{z_j}})dV,$$ with $dV$ the Euclidean volume form on $D$. 

\begin{lemma}\label{EL for WZW}
  The Euler--Lagrange equation of $\mathscr{E}$ is
\begin{equation}
  \sum^m_{j=1}|\nabla\Phi_{z_j}|^2-2\Phi_{z_j\bar{z}_j}+i\{\Phi_{\bar{z}_j},\Phi_{{z_j}}\}_{\omega_\Phi}=0,  
\end{equation}
where $\nabla\Phi_{z_j}$ is the gradient of $\Phi_{z_j}$ with respect to the metric $\omega_\Phi$.
\end{lemma}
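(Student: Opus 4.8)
The plan is to compute the first variation of $\mathscr{E}$ along an arbitrary compactly supported variation and read off the Euler--Lagrange equation; the two summands $E$ and the $\alpha$-term are treated separately, and the coefficient $4i$ is exactly what makes their contributions combine into (\ref{WZW}). Fix $\Phi$ and let $\eta$ be a smooth function on $D\times X$ with compact support; for $|s|$ small, $\Phi_s:=\Phi+s\eta$ is a one-parameter family in $C^\infty(D,\mathcal H_\omega)$ with $\Phi_0=\Phi$ and variation field $\partial_s\Phi_s|_{s=0}=\eta$, which under the canonical identification $T\mathcal H_\omega\cong\mathcal H_\omega\times C^\infty(X)$ we regard as a section of $T\mathcal H_\omega$ along $\Phi$. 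Since $\mathcal H_\omega$ is convex, hence contractible, and $\theta$ is $d$-closed (Lemma \ref{theta is closed}), a primitive $\alpha$ with $d\alpha=\theta$ exists; the computation below involves only $\theta=d\alpha$, so nothing depends on the choice of primitive. I will show that $\frac{d}{ds}\big|_{s=0}\mathscr{E}(\Phi_s)=0$ for every such $\eta$ is equivalent to (\ref{WZW}).

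\emph{First variation of $E$.} Using $\Phi_{z_j}\Phi_{\bar z_j}=\tfrac14(\Phi_{x_j}^2+\Phi_{y_j}^2)$, rewrite $E(\Phi)=4\sum_j\int_D\int_X\Phi_{z_j}\Phi_{\bar z_j}\,\omega_\Phi^n\,dV$. Differentiating in $s$ produces three kinds of terms: $\eta_{z_j}\Phi_{\bar z_j}\omega_\Phi^n$, $\Phi_{z_j}\eta_{\bar z_j}\omega_\Phi^n$, and $\Phi_{z_j}\Phi_{\bar z_j}\cdot n\,i\partial\bar\partial\eta\wedge\omega_\Phi^{n-1}$ (the last from $\tfrac{d}{ds}\omega_{\Phi_s}^n$, with $\partial\bar\partial$ on $X$). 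Integrating the first two by parts in $z_j,\bar z_j$ over $D$ (no boundary terms, $\eta$ being compactly supported) and the third by parts in $X$ (using that $\omega_\Phi^{n-1}$ is $\partial$- and $\bar\partial$-closed), the terms in which a $z_j$-derivative lands on $\omega_\Phi^n$ cancel against part of the third term, and the pointwise identity $n\,i\partial f\wedge\bar\partial g\wedge\omega_\Phi^{n-1}=\langle df,dg\rangle_{\omega_\Phi}\,\omega_\Phi^n$ (with the appropriate normalization of $|\nabla\cdot|^2$) assembles the rest into
\[
\frac{d}{ds}\Big|_{s=0}E(\Phi_s)=4\int_D g_M\Big(\eta,\ \sum_{j=1}^m\big(|\nabla\Phi_{z_j}|^2-2\Phi_{z_j\bar z_j}\big)\Big)\,dV,
\]
which is precisely the computation behind (\ref{har}).

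\emph{First variation of the $\alpha$-term.} Skew-symmetry of $\alpha$ gives $4i\,\alpha(\Phi_{\bar z_j},\Phi_{z_j})=2\,\alpha(\Phi_{x_j},\Phi_{y_j})$, hence
\[
4i\sum_{j=1}^m\int_D\alpha(\Phi_{\bar z_j},\Phi_{z_j})\,dV=\int_D\Phi^*\alpha\wedge\Omega,\qquad \Omega:=\frac{2}{(m-1)!}\Big(\sum_{j=1}^m dx_j\wedge dy_j\Big)^{m-1},
\]
where $\Omega$ has constant coefficients, so $d\Omega=0$. The standard variation formula gives $\frac{d}{ds}\big|_{s=0}\Phi_s^*\alpha=d(\Phi^*\iota_\eta\alpha)+\Phi^*(\iota_\eta\,d\alpha)=d(\Phi^*\iota_\eta\alpha)+\Phi^*(\iota_\eta\theta)$ (Cartan's magic formula plus $d\alpha=\theta$; here $\Phi^*\iota_\eta\alpha$ is the $1$-form $Y\mapsto\alpha(\eta,\Phi_*Y)$, which depends on $\eta$ alone). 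Since $d\Omega=0$ and $\eta$ is compactly supported, $\int_D d(\Phi^*\iota_\eta\alpha)\wedge\Omega=\int_D d(\Phi^*\iota_\eta\alpha\wedge\Omega)=0$ by Stokes, so
\[
\frac{d}{ds}\Big|_{s=0}\Big(4i\sum_j\int_D\alpha(\Phi_{\bar z_j},\Phi_{z_j})\,dV\Big)=\int_D\Phi^*(\iota_\eta\theta)\wedge\Omega=4i\sum_{j=1}^m\int_D\theta(\eta,\Phi_{\bar z_j},\Phi_{z_j})\,dV,
\]
the last step by the same skew-symmetry computation applied to the $2$-form $\iota_\eta\theta$. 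Finally, $\theta(\xi_1,\xi_2,\xi_3)=\int_X\{\xi_1,\xi_2\}_{\omega_\Phi}\xi_3\,\omega_\Phi^n=\int_X\xi_1\{\xi_2,\xi_3\}_{\omega_\Phi}\,\omega_\Phi^n$ (valid for complex-valued arguments by $\mathbb{C}$-multilinearity of the Poisson bracket, as already observed in the text) shows $\theta(\eta,\Phi_{\bar z_j},\Phi_{z_j})=g_M(\eta,\{\Phi_{\bar z_j},\Phi_{z_j}\}_{\omega_\Phi})$, whence the $\alpha$-term contributes $4\int_D g_M\big(\eta,\ \sum_j i\{\Phi_{\bar z_j},\Phi_{z_j}\}_{\omega_\Phi}\big)\,dV$.

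\emph{Conclusion and main difficulty.} Adding the two pieces,
\[
\frac{d}{ds}\Big|_{s=0}\mathscr{E}(\Phi_s)=4\int_D g_M\Big(\eta,\ \sum_{j=1}^m\big(|\nabla\Phi_{z_j}|^2-2\Phi_{z_j\bar z_j}+i\{\Phi_{\bar z_j},\Phi_{z_j}\}_{\omega_\Phi}\big)\Big)\,dV.
\]
As $\eta$ ranges over all compactly supported smooth functions on $D\times X$ and $g_M(\cdot,\cdot)$ is, for each fixed $z$, an $L^2$ inner product on $C^\infty(X)$, the fundamental lemma of the calculus of variations forces $\sum_j(|\nabla\Phi_{z_j}|^2-2\Phi_{z_j\bar z_j}+i\{\Phi_{\bar z_j},\Phi_{z_j}\}_{\omega_\Phi})$ to vanish on $D\times X$, which is (\ref{WZW}). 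I expect the main obstacle to be not the structure but the bookkeeping: tracking the numerical constants so that the $\alpha$-term contributes exactly $i$ times the Poisson term with the same overall scaling as the harmonic part (this is the reason for the coefficient $4i$ in the definition of $\mathscr{E}$), and checking the two cancellations --- the vanishing of the Stokes boundary term, and inside $\delta E$ the cancellation of the terms where a $D$-derivative falls on $\omega_\Phi^n$. One should also confirm the variation formula for $\Phi_s^*\alpha$ on the Fr\'echet manifold $\mathcal H_\omega$; this is harmless, since everything reduces to finite-order jets of $\Phi$ along its image, or equivalently one may pull $\alpha$ back by the finite-dimensional map $(z,s)\mapsto\Phi_s(z)$ from $D\times(-\varepsilon,\varepsilon)$ and apply Cartan's formula there.
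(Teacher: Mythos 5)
Your proposal is correct and has the same overall architecture as the paper's proof (first variation against a compactly supported test function, the harmonic part deferred to Donaldson's computation, the $\alpha$-part reduced to $\theta$ and then to the Poisson bracket via the cyclic identity $\int_X\{\xi_1,\xi_2\}_{\omega_\phi}\xi_3\,\omega_\phi^n=\int_X\xi_1\{\xi_2,\xi_3\}_{\omega_\phi}\,\omega_\phi^n$), but you handle the key step --- the variation of the $\alpha$-term --- by a genuinely different mechanism. The paper proceeds by hand: it introduces the trilinear map $A(u,\xi,\eta)=\alpha((u,\xi),(u,\eta))$, differentiates along the variation by the chain rule, integrates by parts in $z_j,\bar z_j$ over $D$, and only then recognizes the surviving combination of partial differentials $d_1A$ as the expression for $d\alpha=\theta$ on constant vector fields furnished by Lemma \ref{d formula}, arriving at (\ref{30}). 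You instead package the term as $\int_D\Phi^*\alpha\wedge\Omega$ with $\Omega$ a constant-coefficient closed form and apply the variation formula $\frac{d}{ds}\big|_{s=0}\Phi_s^*\alpha=d(\Phi^*\iota_\eta\alpha)+\Phi^*(\iota_\eta\theta)$, killing the exact part by Stokes. The two computations are the same integration by parts seen once in coordinates and once invariantly; your version buys transparency --- it is immediate that the Euler--Lagrange equation depends only on $\theta$ and not on the choice of primitive $\alpha$ --- at the cost of having to justify Cartan's formula on the Fr\'echet manifold $\mathcal H_\omega$, which your reduction to the finite-dimensional source $D\times(-\varepsilon,\varepsilon)$ together with Lemma \ref{d formula} adequately handles. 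Your constant bookkeeping ($4i\,\alpha(\Phi_{\bar z_j},\Phi_{z_j})=2\,\alpha(\Phi_{x_j},\Phi_{y_j})$ and $\theta(\eta,\Phi_{\bar z_j},\Phi_{z_j})=g_M(\eta,\{\Phi_{\bar z_j},\Phi_{z_j}\}_{\omega_\Phi})$) checks out against (\ref{Don}) and (\ref{30}).
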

\begin{proof}
Let $\Psi$ be a smooth map from $D$ to $C^\infty(X)$ with compact support. The variational equation is 
\begin{equation}\label{whole var}
    0=  \frac{d}{d t}\Bigr|_{t=0}\Bigr( \int_D |(\Phi+t\Psi)_*|^2 dV +   4i\sum_j  \int_D\alpha ((\Phi+t\Psi)_{\bar{z}_j},(\Phi+t\Psi)_{{z_j}}) dV \Bigr ).
\end{equation}
An extension of the computation in \cite[Section 2]{Don99} shows that the first term in (\ref{whole var}) is equal to  
\begin{equation}\label{Don}
    \frac{d}{d t}\Bigr|_{t=0} \int_D |(\Phi+t\Psi)_*|^2 dV=\int_D\int_X 4(\sum_j|\nabla \Phi_{z_j}|^2-2\sum_j \Phi_{z_j\bar{z}_j})\Psi \omega^n_\Phi dV. 
\end{equation}
So the remaining task is to compute the second term in (\ref{whole var}). 

We denote $C^\infty(X,\mathbb{C})$ by $C^\infty_{\mathbb{C}}(X)$. Introduce $A:\mathcal H_\omega\times C^\infty_{\mathbb{C}}(X)\times C^\infty_{\mathbb{C}}(X)\to \mathbb C$ as follows. If $(u,\xi),(u,\eta)\in \mathcal H_\omega\times C^\infty_{\mathbb C}(X)\approx \mathbb{C}\otimes T\mathcal{H}_\omega$, then 
$A(u,\xi,\eta):=\alpha((u,\xi),(u,\eta))$. Therefore, for fixed small $t\in \mathbb{R}$,  $\alpha ((\Phi+t\Psi)_{\bar{z}_j},(\Phi+t\Psi)_{{z_j}})=A(\Phi+t\Psi,(\Phi+t\Psi)_{\bar{z}_j}, (\Phi+t\Psi)_{{z_j}} ):D\to \mathbb{C}$. By chain rule, 
\begin{equation}\label{dt}
\begin{aligned}
&\frac{d}{d t}\Bigr|_{t=0}    A(\Phi+t\Psi,(\Phi+t\Psi)_{\bar{z}_j}, (\Phi+t\Psi)_{{z_j}} )\\
=&d_1A(\Phi,\Phi_{\bar{z}_j},\Phi_{z_j})(\Psi)+d_2A(\Phi,\Phi_{\bar{z}_j},\Phi_{z_j})(\Psi_{\bar{z}_j})+d_3A(\Phi,\Phi_{\bar{z}_j},\Phi_{z_j})(\Psi_{z_j}),
\end{aligned}
\end{equation}
where $d_1A, d_2A$, and $d_3A$ are partial differentials of $A$. Since $A$ is linear in the second and the third variables, $d_2A(\Phi,\Phi_{\bar{z}_j},\Phi_{z_j})(\Psi_{\bar{z}_j})=A(\Phi,\Psi_{\bar{z}_j},\Phi_{z_j})$ and $d_3A(\Phi,\Phi_{\bar{z}_j},\Phi_{z_j})(\Psi_{z_j})=A(\Phi,\Phi_{\bar{z}_j},\Psi_{z_j})$. Hence (\ref{dt}) becomes 
\begin{equation}\label{25}
    d_1A(\Phi,\Phi_{\bar{z}_j},\Phi_{z_j})(\Psi)+A(\Phi,\Psi_{\bar{z}_j},\Phi_{z_j})+A(\Phi,\Phi_{\bar{z}_j},\Psi_{z_j}).
\end{equation}
By similar computations, 
\begin{equation}
    \begin{aligned}
    \frac{\partial}{\partial \bar{z}_j}A(\Phi,\Psi, \Phi_{z_j})=& d_1A(\Phi,\Psi, \Phi_{z_j})(\Phi_{\bar{z}_j})+A(\Phi,\Psi_{\bar{z}_j}, \Phi_{z_j})+ A(\Phi,\Psi, \Phi_{z_j\bar{z}_j}), \textup{ and also}\\
\frac{\partial}{\partial z_j}A(\Phi,\Phi_{\bar{z}_j}, \Psi)=&d_1A(\Phi,\Phi_{\bar{z}_j},\Psi)(\Phi_{z_j})+A(\Phi,\Phi_{\bar{z}_jz_j},\Psi)+A(\Phi,\Phi_{\bar{z}_j},\Psi_{z_j}).  
\end{aligned}
\end{equation}
So integration by parts gives 
\begin{equation}\label{int by parts}
    \begin{aligned}
    \int_D A(\Phi,\Psi_{\bar{z}_j}, \Phi_{z_j})dV=-\int_D \Big(  d_1A(\Phi,\Psi, \Phi_{z_j})(\Phi_{\bar{z}_j})+A(\Phi,\Psi, \Phi_{z_j\bar{z}_j})\Big) dV,\\
    \int_D A(\Phi,\Phi_{\bar{z}_j},\Psi_{z_j})dV=-\int_D \Big( d_1A(\Phi,\Phi_{\bar{z}_j},\Psi)(\Phi_{z_j})+A(\Phi,\Phi_{\bar{z}_jz_j},\Psi)     \Big)dV. 
    \end{aligned}
\end{equation}
Combining (\ref{25}) and (\ref{int by parts})
\begin{equation}\label{28}
\begin{aligned}
       &\frac{d}{d t}\Bigr|_{t=0}  \int_D\alpha((\Phi+t\Psi)_{\bar{z}_j},(\Phi+t\Psi)_{{z_j}})dV\\
       =&\int_D d_1A(\Phi,\Phi_{\bar{z}_j},\Phi_{z_j})(\Psi)-d_1A(\Phi,\Psi, \Phi_{z_j})(\Phi_{\bar{z}_j})-d_1A(\Phi,\Phi_{\bar{z}_j},\Psi)(\Phi_{z_j})dV.
       \end{aligned}
\end{equation}

For a fixed point $z_0\in D$, $\Psi(z_0),\Phi_{\bar{z}_j}(z_0),$ and $\Phi_{z_j}(z_0)$ define three constant vector fields on $\mathcal H_\omega$, and we denote them by $\xi_1,\xi_2$, and $\xi_3$ respectively. By Lemma \ref{d formula}, $d\alpha(\xi_1,\xi_2,\xi_3)=\xi_1\alpha(\xi_2,\xi_3)-\xi_2\alpha(\xi_1,\xi_3)+\xi_3\alpha(\xi_1,\xi_2)$. Meanwhile, for constant vector fields $\xi_1, \xi_2, \xi_3$, the function $\xi_1\alpha(\xi_2,\xi_3)$ evaluated at $u\in \mathcal{H}_\omega$ is $d_1A(u,\xi_2,\xi_3)(\xi_1)$. So at $\Phi(z_0)\in \mathcal H_\omega$,  
\begin{equation}
\begin{aligned}
 d\alpha(\xi_1,\xi_2,\xi_3) =&d_1A(\Phi(z_0),\xi_2,\xi_3)(\xi_1)-d_1A(\Phi(z_0),\xi_1,\xi_3 )(\xi_2)+d_1A(\Phi(z_0),\xi_1,\xi_2)(\xi_3)\\
 =&d_1A(\Phi(z_0),\xi_2,\xi_3)(\xi_1)-d_1A(\Phi(z_0),\xi_1,\xi_3 )(\xi_2)-d_1A(\Phi(z_0),\xi_2,\xi_1)(\xi_3).
 \end{aligned}
\end{equation}
Hence (\ref{28}) becomes 
\begin{equation}\label{30}
\int_D d\alpha(\Psi,\Phi_{\bar{z}_j},\Phi_{z_j}) dV=\int_D \theta(\Psi,\Phi_{\bar{z}_j},\Phi_{z_j})dV=\int_D \int_X\{\Phi_{\bar{z}_j},\Phi_{{z_j}}\}_{\omega_\Phi}\Psi \omega_\Phi^n  dV.
\end{equation}

Finally, with (\ref{Don}) and (\ref{30}), the variational equation (\ref{whole var}) becomes 
\begin{equation}
\begin{aligned}
0=\int_D\int_X \Big( 4(\sum_j|\nabla \Phi_{z_j}|^2-2\sum_j \Phi_{z_j\bar{z}_j})  +4i\sum_j  \{\Phi_{\bar{z}_j},\Phi_{{z_j}}\}_{\omega_\Phi}\Big) \Psi \omega_\Phi^n dV,
\end{aligned}
\end{equation}
and we obtain the Euler--Lagrange equation $$\sum_j|\nabla \Phi_{z_j}|^2-2\sum_j \Phi_{z_j\bar{z}_j} + i\sum_j  \{\Phi_{\bar{z}_j},\Phi_{{z_j}}\}_{\omega_\Phi}=0.$$
 \end{proof}

\section{Lemma \ref{approx}}\label{section approx}
This section is mainly devoted to the proof of Lemma \ref{approx}, and we will follow closely the ideas in \cite{BK07}. The first two lemmas, concerning smooth approximation of continuous $\eta$-subharmonic functions, are based on Demailly's exposition \cite[Chapter I, Section 5E]{Dem12} of Richberg's paper \cite{Ric68}. 

Let $\theta\in C^\infty(\mathbb{R},\mathbb{R})$ be a nonnegative function having support in $[-1,1]$ with $\int_{\mathbb{R}}\theta(h)dh=1$ and $\int_{\mathbb{R}}h\theta(h)dh=0$. For arbitrary $\xi=(\xi_1,...,\xi_p)\in(0,\infty)^p$, the regularized maximal function is 
\begin{equation*}
    M_\xi(t_1,...,t_p):=\int_{\mathbb{R}^n}\max\{t_1+h_1,...,t_p+h_p\}\prod^n_{j=1}\theta(\frac{h_j}{\xi_j})\frac{dh_1}{\xi_1}...\frac{dh_p}{\xi_p}.
\end{equation*} 

\begin{lemma}\label{M}
  Fix a closed smooth positive $(1,1)$-form $\eta$ on $X$. Let $\Omega_\alpha \subset\subset D\times X$ be a locally finite open cover of $D\times X$, $c$ be a real number, and $u_\alpha\in C^\infty(\overline{\Omega}_\alpha)$ such that $u_\alpha(z,x)+c|z|^2$ is $\eta$-subharmonic on graphs. Assume that there exists a family $\{\xi_\alpha\}$ of positive numbers such that, for all $\beta$ and $(z,x)\in \partial \Omega_\beta$, $$u_\beta(z,x)+\xi_\beta \leq \max\{u_\alpha(z,x)-\xi_\alpha: \alpha \textup{ such that } (z,x)\in \Omega_\alpha  \}.$$ 
  
  Define a function $\tilde{u}$ on $D\times X$ as follows. Given $(z,x)\in D\times X$, let $A=\{\alpha:(z,x)\in \Omega_\alpha\}$, $\xi_A=(\xi_\alpha)_{\alpha\in A}$, and $u_A(z,x)=\{u_\alpha(z,x):\alpha\in A\}$, and  
  \begin{equation*}
  \Tilde{u}(z,x):=M_{\xi_A}(u_A(z,x)). 
  \end{equation*}
  Then $\tilde{u}$ is in $C^\infty(D\times X)$ and $\Tilde{u}(z,x)+c|z|^2$ is $\eta$-subharmonic on graphs.
\end{lemma}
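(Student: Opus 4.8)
The plan is to verify two things: first that $\tilde u$ is smooth, and second that it is $\eta$-subharmonic on graphs after adding $c|z|^2$. Both follow from the two classical properties of the regularized maximal function $M_\xi$, which I would recall at the outset: (i) $M_\xi$ is smooth, convex, nondecreasing in each argument, symmetric under permutations of arguments that share the same smoothing parameter, and satisfies $M_\xi(t_1,\dots,t_p)\geq\max_j t_j$, with $M_\xi(t_1+a,\dots,t_p+a)=M_\xi(t_1,\dots,t_p)+a$ for $a\in\mathbb{R}$; and (ii) if $t_j > \max_{i\neq j}(t_i+\xi_i-\xi_j)$ fails to hold for no index — more precisely, if the $j$-th argument is ``dominant'' in the sense that $t_j - \xi_j \geq t_i - \xi_i$ for all $i$ with strict room, wait — the precise gluing property I want is: if near a point only the arguments indexed by a subset $A_0\subseteq A$ are ``active'' (the others being strictly dominated by the bound), then locally $\tilde u$ coincides with $M_{\xi_{A_0}}(u_{A_0})$, a smooth convex nondecreasing function of those finitely many $u_\alpha$'s alone.

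With these in hand, smoothness of $\tilde u$ is the local statement: fix $(z_0,x_0)$; by local finiteness only finitely many $\Omega_\alpha$ meet a neighborhood, and by the hypothesis $u_\beta+\xi_\beta\leq\max\{u_\alpha-\xi_\alpha\}$ on $\partial\Omega_\beta$ together with continuity, in a small enough neighborhood of $(z_0,x_0)$ the set $A$ of active indices is constant and every $\Omega_\alpha$ with $\alpha\in A$ actually contains that neighborhood while the boundary terms from $\Omega_\beta$ with $(z_0,x_0)\in\partial\Omega_\beta$ are strictly dominated — so on that neighborhood $\tilde u = M_{\xi_A}(u_A)$ with all $u_\alpha\in C^\infty$, hence $\tilde u\in C^\infty$ there. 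One must be slightly careful when $(z_0,x_0)$ lies on some $\partial\Omega_\beta$: the smoothing kernel ``rounds the corner'' precisely so that the dominated argument $u_\beta$ can be dropped without affecting the value, and I would cite the Richberg/Demailly argument for this, which is exactly property (ii).

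For the $\eta$-subharmonicity on graphs of $\tilde u + c|z|^2$: fix a holomorphic map $f$ from an open $W\subseteq D'$ into $X$ and a local potential $\psi$ of $\eta$; I must show $\psi(f(z)) + \tilde u(z,f(z)) + c|z|^2$ is subharmonic in $z\in W$. Working locally near a point of $W$, by the previous paragraph $\tilde u(z,f(z)) = M_{\xi_A}(u_A(z,f(z)))$ for a fixed finite index set $A$. Each function $g_\alpha(z) := \psi(f(z)) + u_\alpha(z,f(z)) + c|z|^2$ is subharmonic on $W$ by hypothesis (that is exactly what ``$u_\alpha + c|z|^2$ is $\eta$-subharmonic on graphs'' says). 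Now $\psi(f(z)) + \tilde u(z,f(z)) + c|z|^2 = M_{\xi_A}\big((g_\alpha(z) - \psi(f(z)) - c|z|^2)_{\alpha\in A}\big) + \psi(f(z)) + c|z|^2$, and using the translation property of $M$ to absorb the common term $\psi(f(z)) + c|z|^2$, this equals $M_{\xi_A}\big((g_\alpha(z))_{\alpha\in A}\big)$. Since $M_{\xi_A}$ is smooth, convex, and nondecreasing in each variable, and each $g_\alpha$ is subharmonic, the composition is subharmonic — this is the standard fact that a convex nondecreasing function of subharmonic functions (applied componentwise to a convex increasing function of several variables, with the $g_\alpha$ subharmonic) is subharmonic. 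Patching these local conclusions gives subharmonicity on all of $W$, hence $\tilde u + c|z|^2$ is $\eta$-subharmonic on graphs.

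The main obstacle is the local identification $\tilde u = M_{\xi_A}(u_A)$ near points lying on the boundary of some $\Omega_\beta$, i.e. showing that the ``inactive'' indices can be discarded in a full neighborhood. This is where the specific constants $\xi_\alpha$ and the compatibility inequality on $\partial\Omega_\beta$ enter, and where the choice of mollifier $\theta$ supported in $[-1,1]$ with vanishing first moment matters (the latter guarantees $M_\xi \geq \max$ and the exact translation identity). I would handle it exactly as in Demailly's treatment of Richberg's theorem: by continuity the strict inequality $u_\beta(z,x)+\xi_\beta < \max_{\alpha\in A}\{u_\alpha(z,x)-\xi_\alpha\}$ propagates to a neighborhood, and then the definition of $M$ via an integral over a box of size $2\xi$ shows the $u_\beta$-slot contributes its value through $\max$ only when it exceeds the others, which it does not; so that slot can be frozen out. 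Everything else is a routine application of the convexity/monotonicity calculus for $M_\xi$ together with the local finiteness of the cover.
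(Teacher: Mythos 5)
Your proposal is correct and follows essentially the same route as the paper: localize via the domination hypothesis and the Richberg--Demailly lemmas to identify $\tilde u$ with $M_{\xi_I}(u_I)$ for a fixed finite index set near each point (giving smoothness), then restrict to a holomorphic graph, use the translation property of $M_\xi$ to absorb $\psi(f(z))+c|z|^2$ into each slot, and conclude by the fact that a convex, nondecreasing $M_\xi$ applied to subharmonic functions is subharmonic. No substantive differences from the paper's argument.
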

\begin{proof}
 As in the proof of \cite[Chapter I, Lemma 5.17 and Corollary 5.19]{Dem12}, one can deduce that for a fixed point in $D \times X$,  there exist a neighborhood $V$ and a finite set $I$ of indices $\alpha$ such that $V\subset \bigcap_{\alpha\in I}\Omega_\alpha$ on which $\Tilde{u}=M_{\xi_I}(u_I)$. As a result, by \cite[Lemma 5.18 (a)]{Dem12}, $\Tilde{u}$ is smooth on $D\times X$. Now for a holomorphic map $f$ from an open subset of $D$ to $X$, 
 \begin{align*}
   \Tilde{u}(z,f(z))+c|z|^2+\psi(f(z))&=c|z|^2+\psi(f(z))+M_{\xi_I}(u_I(z,f(z)))\\
   &=M_{\xi_I}\big(c|z|^2+\psi(f(z))+u_I(z,f(z))\big), 
 \end{align*}
where $\eta=i\partial\bar{\partial}\psi$, and we use \cite[Lemma 5.18 (d)]{Dem12} in the last equality; furthermore, since $c|z|^2+\psi(f(z))+u_\alpha(z,f(z))$ is subharmonic by assumption, so is $M_{\xi_I}(c|z|^2+\psi(f(z))+u_I(z,f(z)))$ by \cite[Lemma 5.18 (a)]{Dem12}, and therefore  $\Tilde{u}+c|z|^2$ is $\eta$-subharmonic on graphs.
\end{proof}

 We introduce here some notations that will be used later. Let $\rho_{1},\rho_{2}$ be kernels (i.e. nonnegative radial smooth functions with support in the unit ball and having integral one) in $\mathbb{C}^m$ and $\mathbb{C}^n$ respectively. For $\varepsilon>0$, $\rho_{1,\varepsilon}(\cdot):=\varepsilon^{-m} \rho_1(\cdot /\varepsilon)$, and $\rho_{2,\varepsilon}$ is similarly defined.
 
 The proof of the following lemma is very similar to that of \cite[Chapter 1, Theorem 5.21]{Dem12}.
 
\begin{lemma}\label{Richberg}
 Let $u\in C(D\times X)$ be $\eta$-subharmonic on graphs. For any number $\lambda>0$, there exists $\Tilde{u}\in C^\infty(D\times X)$ such that $u\leq \Tilde{u}\leq u+M\lambda$ where $M$ depends only on the diameter of $D$, and $\Tilde{u}$ is $(1+\lambda)\eta$-subharmonic on graphs.
\end{lemma}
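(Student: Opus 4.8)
The plan is to adapt the classical Richberg regularization argument (as in \cite[Chapter I, Theorem 5.21]{Dem12}) to the setting of functions that are $\eta$-subharmonic on graphs, using Lemma \ref{M} as the gluing tool. The overall strategy is a partition-of-unity patching: locally smooth $u$ by convolution in suitable coordinates, check that the local smoothings are $(1+\lambda)\eta$-subharmonic on graphs and stay close to $u$, then glue them together with the regularized maximum $M_\xi$.

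\textbf{Step 1: local smoothing.} Fix a locally finite cover $\{\Omega_\alpha\}$ of $D\times X$ by sets that are relatively compact in coordinate charts $D\times \Omega$, where $\eta = i\partial\bar\partial\psi$ on $\Omega$. On each such chart, $\psi(x) + u(z,x)$ restricted to graphs of holomorphic maps is subharmonic; equivalently (testing on linear maps as in the proof of Theorem \ref{thm bo bundle}) one has the matrix inequality that the relevant Hessian block is positive semidefinite. I would convolve $u$ with the product kernel $\rho_{1,\varepsilon}(z)\rho_{2,\varepsilon}(x)$ in the chart. Convolution preserves subharmonicity on graphs up to a controlled loss: the convolved function $u_\varepsilon$ satisfies, for holomorphic $f$, that $\psi(f(z)) + u_\varepsilon(z,f(z))$ is subharmonic after adding a small multiple of $\psi$ — here one uses that $\psi$ is smooth and strictly plurisubharmonic, so that $(1+\lambda)\psi - \psi$ absorbs the error coming from the variation of $\psi$ over the $\varepsilon$-ball, provided $\varepsilon$ is small depending on $\lambda$ and the chart. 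Also $u \le u_\varepsilon \le u + \omega_u(\varepsilon)$ where $\omega_u$ is a modulus of continuity of $u$, and by shrinking $\varepsilon$ on each $\Omega_\alpha$ one arranges $u \le u_\alpha := u_{\varepsilon_\alpha} \le u + \lambda$ pointwise on $\overline\Omega_\alpha$, with $u_\alpha$ being $(1+\lambda)\eta$-subharmonic on graphs there (equivalently, $u_\alpha(z,x)$ itself, not just with a $c|z|^2$ term; the $c|z|^2$ bookkeeping in Lemma \ref{M} is only needed because I will invoke that lemma with $c=0$ here).

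\textbf{Step 2: arranging the separation condition and gluing.} The delicate combinatorial point, exactly as in Demailly's proof, is to choose the small constants $\xi_\alpha > 0$ so that the hypothesis of Lemma \ref{M} holds: for every $\beta$ and every $(z,x) \in \partial\Omega_\beta$, one needs $u_\beta(z,x) + \xi_\beta \le \max\{u_\alpha(z,x) - \xi_\alpha : (z,x) \in \Omega_\alpha\}$. Since all $u_\alpha$ are uniformly close to the continuous function $u$ (within $\lambda$), and since by local finiteness and compactness of $\partial\Omega_\beta$ inside the union of the other $\Omega_\alpha$ there is a definite "overlap margin," one can choose the $\xi_\alpha$ inductively/small enough that this holds; this is where the factor $M$ (depending on the diameter of $D$, hence on the number of patches and the geometry of the cover) enters the final estimate $u \le \tilde u \le u + M\lambda$. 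Then Lemma \ref{M} applied with $c=0$ produces $\tilde u := M_{\xi_A}(u_A) \in C^\infty(D\times X)$ that is $(1+\lambda)\eta$-subharmonic on graphs, and since $M_\xi$ of numbers lies between $\max - \max_j\xi_j$ and $\max + \max_j\xi_j$ one gets $u \le \tilde u \le u + M\lambda$ after absorbing the $\xi$'s and the patch count into $M$.

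\textbf{Main obstacle.} I expect the real work to be Step 1: verifying that convolution of a function that is $\eta$-subharmonic on graphs is again $(1+\lambda)\eta$-subharmonic on graphs, with the loss controlled by $\varepsilon$. The subtlety is that "$\eta$-subharmonic on graphs" is tested against \emph{all} holomorphic maps $f$, not just a fixed one, and convolution in the $(z,x)$ variables does not commute naively with composition $z \mapsto (z, f(z))$. The clean way around this, following \cite{BK07}, is to pass to the infinitesimal/matrix formulation: on a chart, $u$ being $\eta$-subharmonic on graphs is equivalent to a positivity statement for the complex Hessian blocks of $\psi + u$ (this is the computation already carried out around inequality (\ref{subhar ineq}) and in Lemma \ref{+ det}), convolution acts linearly on Hessians, and the error term is the variation of $\psi_{\mu\bar\lambda}$ over an $\varepsilon$-ball, which is $O(\varepsilon)$ and hence dominated by $\lambda\,\psi_{\mu\bar\lambda}$ once $\varepsilon$ is small — here strict positivity of $\eta$ is essential. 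One must be slightly careful that $u$ is only continuous, not $C^2$, so the Hessian statement should be interpreted distributionally (or one first notes the convolution is smooth and works with its genuine Hessian, comparing to the mollified inequality). Handling the transition functions between charts — i.e. that the local smoothings glue to a globally defined function before applying $M_\xi$ — is taken care of precisely by the $M_\xi$ construction in Lemma \ref{M}, so no extra argument is needed there.
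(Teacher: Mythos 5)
Your overall architecture (local convolution followed by gluing with the regularized maximum of Lemma \ref{M}) is the same as the paper's, but Step 2 contains a genuine gap. Knowing only $u \le u_\alpha \le u + \lambda$ on each patch does \emph{not} let you arrange the hypothesis of Lemma \ref{M}: at a point of $\partial\Omega_\beta$ it may happen that $u_\beta$ equals $u+\lambda$ while every competing $u_\alpha$ equals $u$ there, so $u_\beta - u_\alpha = \lambda > 0$ and no choice of small $\xi$'s can produce $u_\beta + \xi_\beta \le u_\alpha - \xi_\alpha$; the ``overlap margin'' of the cover is a geometric fact about the sets and says nothing about the relative sizes of the local regularizations. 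The standard Richberg remedy, which the paper uses, is to add to each convolution the bump $\delta_\alpha(r_\alpha'^2 - |z - c_\alpha|^2 - |x|^2)$, which is negative near $\partial\Omega_\alpha$ and positive on an inner ball $\Omega_\alpha''$, and to take $\varepsilon_\alpha$ so small that the convolution is within $\xi_\alpha$ of $u$; then $u_\alpha < u - \xi_\alpha$ on $\partial\Omega_\alpha$ and $u_\alpha > u + \xi_\alpha$ on $\Omega_\alpha''$, which yields the separation because the $\Omega_\alpha''$ still cover. This bump is also where the constants in the statement actually come from: subtracting $\delta_\alpha(|z-c_\alpha|^2+|x|^2)$ costs $\lambda\Delta|z|^2 + \lambda\Delta\psi_\alpha(f(z))$ along graphs, so Lemma \ref{M} must be applied with $c=\lambda$ (not $c=0$ as you propose), and the final function is $\tilde u = U + \lambda|z|^2$; the factor $M$ depends on the diameter of $D$ because of this added $\lambda|z|^2$, not because of the number of patches (which depends on $X$ and the cover, not on $D$).

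A secondary remark on Step 1: there is in fact no loss at all in the convolution step, and no need for the Hessian/matrix formulation, which is delicate for merely continuous $u$ (the pointwise characterization of Lemma \ref{+ det} requires $C^2$, and your suggested fix is close to circular, since to know the mollification satisfies the mollified Hessian inequality you already need to know that mollification preserves graph-subharmonicity). The clean argument is that if $v$ is subharmonic on holomorphic graphs then so is every translate $v(\cdot - a, \cdot - b)$ (compose with the translated holomorphic map), hence so is the average $(\psi_\alpha + u)*\rho_{\varepsilon_\alpha}$; subtracting $\psi_\alpha$ back gives a smooth local function that is exactly $\eta$-subharmonic on graphs. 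The entire $(1+\lambda)$ loss is created by the bump functions of Step 2, not by mollification.
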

\begin{proof}
 Let $\{\Omega_\alpha\}$ be a locally finite open cover of $D\times X$ by relatively compact open balls contained in coordinate patches of $D\times X$. Choose concentric balls $\Omega_\alpha''\subset \Omega_\alpha'\subset \Omega_\alpha$ of radii $r_\alpha''<r_\alpha'<r_\alpha$ and center $(c_\alpha,0)$ in the given coordinates $(z,x)$ near $\overline{\Omega}_\alpha$, such that $\Omega_\alpha''$ still cover $D\times X$, and $\eta$ has a local potential $\psi_\alpha$ in a neighborhood of $\overline{\Omega}_\alpha$. For small $\varepsilon_\alpha>0$ and $\delta_\alpha>0$, we set $$u_\alpha(z,x)=\bigr((u+\psi_\alpha)*\rho_{\varepsilon_\alpha}\bigr)(z,x)-\psi_\alpha(x)+\delta_\alpha(r_\alpha'^2-|z-c_\alpha|^2-|x|^2) \textup{ on } \overline{\Omega}_\alpha,$$ 
 where $*\rho_{\varepsilon_\alpha}$ is the convolution with $\rho_{\varepsilon_\alpha}:=\rho_{1,\varepsilon_\alpha}\rho_{2,\varepsilon_\alpha}$. Since $\psi_\alpha(x)+u(z,x)$ is subharmonic in $z$ and psh in $x$ by Lemma \ref{lemma psh in X}, the functions $(\psi_\alpha+u)*\rho_{\varepsilon_\alpha}$ decrease to $\psi_\alpha +u$ as $\varepsilon_\alpha$ goes to 0, locally uniformly because $u$ is continuous. For $\varepsilon_\alpha$ and $\delta_\alpha$ small enough, we have $u_\alpha\leq u+\lambda/2$ on $\overline{\Omega}_\alpha$; moreover, for any holomorphic map $f$ from an open subset of $D$ to $X$, 
 \begin{align*}
     \Delta\bigr(u_\alpha(z,f(z))+\psi_\alpha(f(z))\bigr)&=\Delta \bigr( (u+\psi_\alpha)*\rho_{\varepsilon_\alpha}\bigr) (z,f(z))-\delta_\alpha\Delta(|z-c_\alpha|^2+|f(z)|^2)\\
     &\geq -\delta_\alpha\Delta(|z-c_\alpha|^2+|f(z)|^2)\\
     &\geq -\lambda \Delta |z|^2-\lambda\Delta\psi_\alpha(f(z)),
 \end{align*}
 where the first inequality is due to the fact $(u+\psi_\alpha)*\rho_{\varepsilon_\alpha}$ is subharmonic on holomorphic graphs, which can be verified easily using $(u+\psi_\alpha)$ is subharmonic on holomorphic graphs. So $u_\alpha(z,x)+\lambda|z|^2$ is $(1+\lambda)\eta$-subharmonic on graphs. Set $$\xi_\alpha=\delta_\alpha\min\{r_\alpha'^2-r_\alpha''^2,(r_\alpha^2-r_\alpha'^2)/2\}.$$ Choose first $\delta_\alpha$ such that $\xi_\alpha<\lambda/2$, and then $\varepsilon_\alpha$ so small that $u\leq (u+\psi_\alpha)*\rho_{\varepsilon_\alpha}(z,x)-\psi_\alpha(x)<u+\xi_\alpha$ on $\overline{\Omega}_\alpha
 $. As $\delta_\alpha(r_\alpha'^2-|z-c_\alpha|^2-|x|^2)$ is $\leq -2\xi_\alpha$ on $\partial\Omega_\alpha$ and $>\xi_\alpha$ on ${\Omega}_\alpha''$, we have $u_\alpha<u-\xi_\alpha$ on $\partial\Omega_\alpha$ and $u_\alpha>u+\xi_\alpha$ on $\Omega_\alpha''$, so that the assumption in Lemma \ref{M} is satisfied, and the function 
 \begin{equation*}
     U(z,x):=M_{\xi_A}(u_A(z,x)), \textup{ for $A=\{\alpha$: $\Omega_\alpha \ni (z,x)$\}  ,}
 \end{equation*}
 is in $C^\infty(D\times X)$ and $U(z,x)+\lambda|z|^2$ is $(1+\lambda)\eta$-subharmonic on graphs. By \cite[Lemma 5.18 (b)]{Dem12}, $u\leq U\leq u+\lambda$. Then the function defined by $\Tilde{u}:=U+\lambda|z|^2$ is what we need.
\end{proof}

The following lemma is proved in the same way as Lemmas 4 and 5 in \cite{BK07}. The only issue is keeping track of uniformity.

\begin{lemma}\label{local uni}
Let $U,V$ be two open sets in $\mathbb{C}^n$ and $F$ a biholomorphic map from $U$ to $V$. Let $u$ be usc, bounded, and subharmonic on holomorphic graphs in $D\times U$. Define $u_{\delta_1,\delta_2}$ to be the convolution $$u_{\delta_1,\delta_2}(z,x)=\int\int u(z-a,x-b)\rho_{1,\delta_1}(a)\rho_{2,\delta_2}(b)da db,$$ where $\rho_{1,\delta_1},\rho_{2,\delta_2}$ are kernels in $\mathbb{C}^m$ and $\mathbb{C}^n$ respectively. On the other hand, define 
\begin{equation}\label{id F}
  u^F_{\delta_1,\delta_2}(z,x)=(u \circ (\Id\times F^{-1}))_{\delta_1,\delta_2}\circ (\Id\times F).  
\end{equation}
Then as $\delta_2\to 0$, $(u^F_{\delta_1,\delta_2}-u_{\delta_1,\delta_2})(z,x)$ goes to 0 locally uniformly in $z,x$, and $\delta_1$.
\end{lemma}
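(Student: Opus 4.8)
The plan is to derive the statement from the standard comparison of mollifications read in two coordinate charts for a plurisubharmonic function -- the content of Lemmas 4 and 5 of \cite{BK07} (see also \cite[Chapter I, \S5]{Dem12}) -- the point being, as the remark preceding the lemma says, that all the error estimates must come out uniform in $(z,x)$ on compact sets and also in $\delta_1$.

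First I would remove $\delta_1$ from the picture. The map $F$ twists only the second variable, so, setting $\Psi_{\delta_2}(z,x):=\int u(z,x-b)\rho_{2,\delta_2}(b)\,db$ and $\Psi^F_{\delta_2}(z,x):=\int u(z,F^{-1}(F(x)-b))\rho_{2,\delta_2}(b)\,db=\big(u(z,\cdot)\circ F^{-1}\big)_{\delta_2}\!\big(F(x)\big)$, one has $u_{\delta_1,\delta_2}(z,x)=\int\Psi_{\delta_2}(z-a,x)\rho_{1,\delta_1}(a)\,da$ and likewise $u^F_{\delta_1,\delta_2}(z,x)=\int\Psi^F_{\delta_2}(z-a,x)\rho_{1,\delta_1}(a)\,da$. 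Since $\rho_{1,\delta_1}$ is a probability density, for any compact $K\subset\subset D\times U$ and any slightly larger compact $K'$, as soon as $\delta_1<\dist\big(K,(D\times U)\setminus K'\big)$ we get $\sup_K|u^F_{\delta_1,\delta_2}-u_{\delta_1,\delta_2}|\le\sup_{K'}|\Psi^F_{\delta_2}-\Psi_{\delta_2}|$; so it suffices to prove $\sup_{K'}|\Psi^F_{\delta_2}-\Psi_{\delta_2}|\to0$ as $\delta_2\to0$, which automatically gives a rate independent of $\delta_1$. Here $\Psi^F_{\delta_2}$ and $\Psi_{\delta_2}$ are the $\delta_2$-mollifications of the slice function $x\mapsto u(z,x)$ taken in the $F$-coordinate and in the given coordinate respectively, and by Lemma \ref{lemma psh in X} this slice function is psh on $U$ for every fixed $z$.

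Next, localize: it is enough to produce, for each point of $K'$, a product neighbourhood $W\times\Omega$ and a function $\omega(\delta_2)\to0$ with $\sup_{W\times\Omega}|\Psi^F_{\delta_2}-\Psi_{\delta_2}|\le\omega(\delta_2)$, and then to take a finite subcover of $K'$. On a small ball $\Omega\ni x_0$ compare $F$ with its affinisation $x\mapsto F(x_0)+DF(x_0)(x-x_0)$; writing $DF(x_0)=U_0P_0$ in polar form, the unitary factor commutes \emph{exactly} with mollification, $\big[v\circ U_0^{-1}\big]_{\delta}\circ U_0=v_\delta$ for any radial kernel, so a coordinate change lets us assume $DF(x_0)$ is positive definite, and the remaining comparison -- the positive-definite affine part together with the $O(|x-x_0|^2)$ remainder of $F$ -- is handled by the sub-mean value argument of \cite[Lemmas 4, 5]{BK07}. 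After subtracting the constant $\sup_{D\times U}u$, which shifts $\Psi^F_{\delta_2}$ and $\Psi_{\delta_2}$ by the same amount, one may assume $u\le0$; then the sub-mean value property of the psh function $u(z,\cdot)$ bounds $|\Psi^F_{\delta_2}-\Psi_{\delta_2}|$ by a quantity that tends to $0$ with $\delta_2$ and is controlled purely by the $C^2$-norm of $F$ on $K'$ and by $\sup_{D\times U}|u|$. Crucially this bound involves no modulus of continuity of $u$ -- which, being only upper semicontinuous, $u$ need not possess -- nor the particular slice $z$, so the resulting $\omega(\delta_2)$ is common to all $z$; patching the finitely many local estimates finishes the proof.

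The step I expect to be the main obstacle is exactly this uniformity in $z$. One cannot argue slice by slice with a Dini-type theorem, because $\Psi^F_{\delta_2}$ and $\Psi_{\delta_2}$ each decrease to $u$ but -- $u$ being merely upper semicontinuous -- not uniformly; the two non-uniform limits must be shown to agree, which forces a comparison of the two mollifications at \emph{matched} scales (up to a factor $1+O(\delta_2)$) rather than a crude bound by a mollification at an inflated scale. Producing such a matched-scale comparison is precisely what \cite[Lemmas 4, 5]{BK07} supply, and the work is to run through their proof keeping every constant under control by $\|u\|_{L^\infty(D\times U)}$ and the fixed geometry of $F$ -- using the full hypothesis that $u$ is subharmonic on all holomorphic graphs in $D\times U$, not just on the slices $D\times\{x\}$ and $\{z\}\times U$ -- so that the rate of convergence does not deteriorate as $z$ varies.
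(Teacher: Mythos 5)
Your proposal is correct and follows essentially the same route as the paper: reduce to the $\delta_2$-mollification in the $x$-variable alone (the $z$-convolution with the probability kernel $\rho_{1,\delta_1}$ preserves sup-norm bounds, giving uniformity in $\delta_1$), then compare the two mollifications at matched scales via the sup-over-balls/spherical-average chain of \cite[Lemmas 4, 5]{BK07}, with the uniformity in $z$ coming from the log-convexity estimate being controlled by $\|u\|_{L^\infty}$ rather than any modulus of continuity. The only ingredient actually needed beyond boundedness is that each slice $u(z,\cdot)$ is psh (Lemma \ref{lemma psh in X}), so your remark that the full graph-subharmonicity is required here is slightly stronger than necessary, but this does not affect the argument.
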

\begin{proof}
Define $$\hat{u}_{\delta_2}(z,x)=\max_{\{z\}\times \overline{B(x,\delta_2)}}u,$$ $$\Tilde{u}_{\delta_2}(z,x)=\fint_{\partial B(x,\delta_2)}u(z,b)d b, $$
$$u_{\delta_2}(z,x)=\int u(z,x-b)\rho_{2,\delta_2}(b) db,$$ where $\fint$ means the average.
Their counterparts under $\operatorname{Id}\times F^{-1}$ and $\operatorname{Id}\times F$ as in (\ref{id F}) are denoted by $\hat{u}^F_{\delta_2}(z,x), \Tilde{u}^F_{\delta_2}(z,x),$ and $ u^F_{\delta_2}(z,x)$ respectively. 

By Lemma \ref{lemma psh in X}, $u(z,\cdot)$ is psh in $U$, so $\hat{u}_{\delta_2}(z,x)$ is a convex function of $\log \delta_2$. Fixing $a \geq 1$ and $r>0$, choose $\delta_2$ so small that $0 \leq \frac{\log a}{\log \frac{r}{\delta_2}}\leq 1$, then by convexity $$0\leq \hat{u}_{a\delta_2}(z,x)-\hat{u}_{\delta_2}(z,x)\leq \frac{\log a}{\log \frac{r}{\delta_2}}(\hat{u}_{r}(z,x)-\hat{u}_{\delta_2}(z,x)).$$ Since $u$ is assumed to be bounded, it follows that for any $a>0$ (for the case $1>a>0$, use $1/a$ instead), $\hat{u}_{a\delta_2}(z,x)-\hat{u}_{\delta_2}(z,x)$ goes to 0 as $\delta_2\to 0$, locally uniformly in $z$ and $x$. Then following the same argument as in \cite[Lemma 4]{BK07}, we see $\hat{u}^F_{\delta_2}-\hat{u}_{\delta_2}$ goes to 0 locally uniformly in $z$ and $x$, as $\delta_2\to 0$.

%Define $\hat{u}^F_{\delta_2}=\widehat{(u\circ (Id\times F^{-1}))}_{\delta_2}\circ (Id\times F)$, namely $$\hat{u}^F_{\delta_2}(z,x)=\max_{\{z\}\times F^{-1}\overline{B(F(x),\delta_2)}}u.$$

Since $u(z,\cdot)$ is psh in $U$, $\Tilde{u}_{\delta_2}(z,x)$ is convex in $\log \delta_2$. By the argument \cite[Lemma 5]{BK07} and the fact that $u$ is bounded, we see both $\hat{u}_{\delta_2}-\Tilde{u}_{\delta_2}$ and $\Tilde{u}_{\delta_2}-u_{\delta_2}$ go to 0 locally uniformly in $z,x$, as $\delta_2\to 0$, and as a result, so does ${u}^F_{\delta_2}-{u}_{\delta_2}$. Since $(u^F_{\delta_1,\delta_2}-u_{\delta_1,\delta_2})$ is the convolution of (${u}^F_{\delta_2}-{u}_{\delta_2}$) in $z$, we see at once the conclusion of the lemma.
\end{proof}

\begin{proof}[Proof of Lemma \ref{approx}]
 Fix a finite number of charts $U_\alpha \supset\supset V_\alpha$ such that $V_\alpha$ covers $X$, and  $\eta$ has a local potential $\psi_\alpha$ in a neighborhood of $\overline{U_\alpha}$. For each $\alpha$, let $f_\alpha: U_\alpha \to \mathbb{C}^n$ be the coordinate map, we consider the convolution $((\psi_\alpha+u)\circ f^{-1}_\alpha)_{\delta_1,\delta_2}\circ f_\alpha$, which we simply denote by $(\psi_\alpha+u)_{\delta_1,\delta_2}$ on $D \times U_\alpha$. Because $u$ added by a constant still satisfies the same assumption in Lemma \ref{approx}, we will assume $u$ is so negative that $(\psi_\alpha+u)_{\delta_1,\delta_2}-\psi_\alpha<-a$ for some $a>0$ and all $\alpha$.  At the same time, we consider the convolution of $(\psi_\alpha+u)$ under $f_\beta$, namely $((\psi_\alpha+u)\circ f^{-1}_\beta)_{\delta_1,\delta_2}\circ f_\beta$, which can be written as \begin{equation}\label{convolution}
  ((\psi_\alpha+u)\circ f^{-1}_\alpha \circ F^{-1})_{\delta_1,\delta_2}\circ F\circ f_\alpha,  
\end{equation}
if $F^{-1}=f_\alpha\circ f^{-1}_\beta$. We denote (\ref{convolution}) by $(\psi_\alpha+u)^F_{\delta_1,\delta_2}$ (the notation is consistent with Lemma \ref{local uni} except we do not write out the identity map of $D$ here). By Lemma \ref{local uni} on $D\times (U_\alpha \cap U_\beta)$ 
\begin{equation}\label{4}
\begin{aligned}
    &(\psi_\alpha+u)_{\delta_1,\delta_2}-(\psi_\beta+u)_{\delta_1,\delta_2}=(\psi_\alpha+u)_{\delta_1,\delta_2}-(\psi_\alpha+u)^F_{\delta_1,\delta_2}+(\psi_\alpha+u-(\psi_\beta+u))^F_{\delta_1,\delta_2}\\
    &\to \psi_\alpha-\psi_\beta
\end{aligned}    
\end{equation}
locally uniformly in $z$ and $x$, as $\delta_2,\delta_1\to 0$. 

Let $\chi_\alpha$ be a smooth function in $U_\alpha$ that is 0 in $V_\alpha$ and $-1$ near $\partial U_\alpha$. We have $i\partial \bar{\partial}\chi_\alpha \geq -C\eta$ for some constant $C$. For $0<\varepsilon<1$, according to (\ref{4}) we can find $\delta_1,\delta_2$ so small that for any $\beta$ and for any $(z,x)\in \overline{D'}\times \partial U_\beta$, $$((\psi_\beta+u)_{\delta_1,\delta_2}-\psi_\beta+\frac{\varepsilon}{C}\chi_\beta)(z,x)< \max_{(z,x)\in \overline{D'}\times U_\alpha}((\psi_\alpha+u)_{\delta_1,\delta_2}-\psi_\alpha+\frac{\varepsilon}{C}\chi_\alpha)(z,x),$$ where the maximum is taken over all $\overline{D'}\times U_\alpha$ that contain $(z,x)$. Let $\delta=\min\{\delta_1,\delta_2\}$.
Then by \cite[Chapter I, Lemma 5.17]{Dem12}, the function   $${u}^{\varepsilon}_\delta(z,x):=\max_{(z,x)\in \overline{D'}\times U_\alpha}((\psi_\alpha+u)_{\delta,\delta}-\psi_\alpha+\frac{\varepsilon}{C}\chi_\alpha)(z,x) ,$$
is continuous on $\overline{D'}\times X$. Notice that ${u}^{\varepsilon}_\delta(z,x)<-a$ for any $0<\varepsilon<1$. Since $\psi_\alpha(x)+u(z,x)$ is subharmonic in $z$ and psh in $x$ by Lemma \ref{lemma psh in X}, the function $(\psi_\alpha+u)_{\delta,\delta}$ is decreasing to $\psi_\alpha +u$ as $\delta \to 0$, and hence $u^\varepsilon_\delta$ is decreasing to $u$ as $\delta \to 0$.

We already know that $\psi_\alpha+u$ is subharmonic on holomorphic graphs, and a straightforward verification shows so is $(\psi_\alpha+u)_{\delta,\delta}$. This fact together with $(\chi_{\alpha})_{\lambda\Bar{\mu}}\geq -C (\psi_\alpha)_{\lambda\Bar{\mu}}$ as martrices shows, for any holomorphic function $f$ from an open subset of $D'$ to $X$, $$\Delta ((\psi_\alpha+u)_{\delta,\delta}-\psi_\alpha+\frac{\varepsilon}{C}\chi_\alpha)(z,f(z))\geq (-1-\varepsilon)\Delta \psi_\alpha(f(z)), $$ so ${u}^{\varepsilon}_\delta$ is $(1+\varepsilon)\eta$-subharmonic on graphs. 

So far we have shown that given $1<p\in \mathbb{N}$, there exist $q_0\in \mathbb{N}$ such that, for $q > q_0$, the functions $u^{1/p}_{1/q}$ are in $C(\overline{D'}\times X)$, $(1+1/p)\eta$-subharmonic on graphs, and decrease to $u$ as $q \to \infty$. For simplicity, we will denote $u^{1/p}_{1/q}$ by $u^p_q$.
Let $M$ be the constant in Lemma \ref{Richberg}. We will construct inductively $u^k_{j_k}$ with $j_k>k^2$ and $\tilde{u}_k\in C^\infty(D'\times X)$ such that 
\begin{equation}
  u^k_{j_k}+1/j_k\leq \tilde{u}_k\leq u^{k}_{j_k}+1/j_k+M/j_k.
\end{equation}
Moreover $\tilde{u}_k$ is $(1+1/k)(1+1/j_k)\eta$-subharmonic on graphs, and $ u^{k}_{j_k}+1/j_k+M/j_k$ is less than both $u^{k-1}_{j_{k-1}}+1/j_{k-1}$ and $u^{2}_{j_{k-1}}+1/j_{k-1}$.

Suppose that this is true at $(k-1)$-th step. As $u^{k-1}_{j_{k-1}}+1/j_{k-1}$ and $u^{2}_{j_{k-1}}+1/j_{k-1}$ are both greater than $u$, we can find $j_k >\max\{j_{k-1},k^2\}$
such that $u^k_{j_k}+1/j_k+M/j_k$ is less than both $u^{k-1}_{j_{k-1}}+1/j_{k-1}$ and $u^{2}_{j_{k-1}}+1/j_{k-1}$ by continuity on the compact set $\overline{D'}\times X$. Applying Lemma \ref{Richberg} with $\lambda=1/j_k$, we find a function $\tilde{u}_k\in C^\infty(D'\times X)$ with $u^k_{j_k}+1/j_k\leq \tilde{u}_k\leq u^{k}_{j_k}+1/j_k+M/j_k$ and $\tilde{u}_k$ is  $(1+1/k)(1+1/j_k)\eta$-subharmonic on graphs. So the induction process is true at $k$-th step. (One can begin the induction process with $u^{2}_{j_2}+1/j_2$ with $j_2$ large enough that $u^{2}_{j_2}+1/j_2<0$).

%We begin with $u^{2}_{j_2}+1/j_2$ with $j_2$ large enough that $u^{2}_{j_2}+1/j_2<0$. As $u^{2}_{j_2}+1/j_2>u$, we can find $j_3>j_2$ such that $u^{3}_{j_3}+1/j_3+M/j_3<u^{2}_{j_2}+1/j_2$ by continuity on the compact set $\overline{D'}\times X$. Now applying Lemma \ref{Richberg} with $\lambda=1/j_3$, we find a function $\tilde{u}^{3}_{j_3} \in C^\infty(D'\times X)$ with ${u}^{3}_{j_3}+1/j_3\leq \tilde{u}^{3}_{j_3}\leq {u}^{3}_{j_3}+1/j_3+M/j_3$ and $\tilde{u}^{3}_{j_3}$ is $(1+1/3)(1+1/j_3)\eta$-subharmonic on graphs. 
%As ${u}^{3}_{j_3}+1/j_3$ and $u^{2}_{j_3}+1/j_3$ are both greater than $u$, we can find $j_4>j_3$ such that $u^{4}_{j_4}+1/j_4+M/j_4$ is less than both ${u}^{3}_{j_3}+1/j_3$ and $u^{2}_{j_3}+1/j_3$ by continuity on the compact set $\overline{D'}\times X$. Applying Lemma \ref{Richberg} with $\lambda=1/j_4$, we find a function $\tilde{u}^{4}_{j_4}\in C^\infty(D'\times X)$ with $u^{4}_{j_4}+1/j_4\leq \tilde{u}^{4}_{j_4}\leq u^{4}_{j_4}+1/j_4+M/j_4$ and $\tilde{u}^{4}_{j_4}$ is $(1+1/4)(1+1/j_4)\eta$-subharmonic on graphs. 

%Repeating the process, we have $\tilde{u}^{1/k}_{1/j_k}\in C^\infty(D'\times X)$ with $u^{1/k}_{1/j_k}+1/j_k\leq \tilde{u}^{1/k}_{1/j_k}\leq u^{1/k}_{1/j_k}+1/j_k+M/j_k$ and $\tilde{u}^{1/k}_{1/j_k}$ is $(1+1/k)(1+1/j_k)\eta$-subharmonic on graphs, and $ u^{1/k}_{1/j_k}+1/j_k+M/j_k$ is less than both $u^{1/k-1}_{1/j_{k-1}}+1/j_{k-1}$ and $u^{1/2}_{1/j_{k-1}}+1/j_{k-1}$. At each step, we also make $j_k>k^2$.

One can see that $\tilde{u}_k$ is decreasing to $u$. Since $\tilde{u}_k<0$, $(1-1/k)\tilde{u}_k$ is still decreasing to $u$. The function $(1-1/k)\tilde{u}_k$ is $(1-1/k^2)(1+1/j_k)\eta$-subharmonic on graphs, which is also $(1-1/k^2j_k)\eta$-subharmonic on graphs because $j_k>k^2$. So $(1-1/k)\tilde{u}_k$ are the desired approximants.
\end{proof}

%The following lemma is a variant of \cite[Chapter I, Lemma 5.18 (e)]{demailly1997complex}, and we will use the same notation except for $p$-tuples, we adopt $\xi=(\xi_1,...,\xi_p)$ instead of $(\eta_1,...,\eta_p)$.
%\begin{lemma}\label{M eta}
%Suppose $u_1,...,u_p$ are usc on $D\times X$ and $\eta$-subharmonic on graphs. Then so is $M_{\xi}(u_1,...,u_p)$. Moreover, if there exists a number $a<1$ such that for any $i$ and any holomorphic map $f$ from an open subset of $D$ to $X$, $\Delta(\psi(f(z))+u_i(z,f(z)))\geq a\Delta(\psi(f(z)))$ where $\psi$ is a local potential of $\eta$, then $\Delta(\psi(f(z))+M_{\xi}(u_1,...,u_p)(z,f(z)))\geq a\Delta(\psi(f(z)))$.
%\end{lemma}
%\begin{proof}
%The first statement is an immediate result of \cite[Chapter I, Lemma 5.18 (a,d)]{demailly1997complex} and  $$\psi(f(z))+M_{\xi}(u_1,...,u_p)(z,f(z))=M_{\xi}(\psi(f(z))+u_1(z,f(z)),...,\psi(f(z))+u_p(z,f(z))).$$
%For the second statement, since $u_i/1-a$ is $\eta$-subharmonic on graphs, the same is true for $M_{\xi/1-a}(u_1/1-a,...,u_p/1-a)$ by the first statement. The lemma follows because $$M_{\xi/1-a}(\frac{u_1}{1-a},...,\frac{u_p}{1-a})=\frac{1}{1-a} M_{\xi}(u_1,...,u_p).$$ 
%\end{proof}

\section{A remark}\label{compare}

In this final section, we compare results in this paper to the work with Darvas \cite{DW19}, where we consider two other families closely related to $G_v$ and $G^k_v$. For $\pi:D\times X \to X$, $$F_v:=\{u:u\in \text{PSH}(D\times X,  \pi^*\omega)\textup{ and }\limsup_{D \ni z \to \zeta\in \partial D}u(z,x)  \leq v(\zeta,x)\},$$ 
\begin{align*}
  F_v^{k}:= \{&D \ni z \to U_z \in \mathcal N_k^* \textup{ is Griffiths negative and }\\ 
  &\limsup_{D\ni z \to \zeta \in \partial D} U^2_z(s) \leq H_k^*(v_\zeta)(s,s) \textup{ for any $s\in H^0(X,L^k)^*$ } \},  
\end{align*}
where a norm function $U_z$ is called Griffiths negative if $\log U_z(f(z))$ is psh for any holomorphic section $f:W\subset D \to H^0(X,L^k)^*$ . Denote the upper envelopes of $F_v$ and $F^k_v$ by $U$ and $U^{k}$ respectively; then one result in \cite{DW19} is that $FS_k((U^{k}_z)^*)$ converges to $U$ uniformly. The difference between $F^{k}_v$ and $G^{k}_v$ is obvious, we simply change plurisubharmonicity to subharmonicity. The relation of $F_v$ and $G_v$ can be seen as follows. Let $\psi$ be a local potential of $\omega$, then a  function $u\in\text{PSH}(D\times X,  \pi^*\omega)$ is equivalent to  $\psi(x)+u(z,x)$ being psh in $z$ and $x$ jointly, which is also equivalent to $\psi(f(z))+u(z,f(z))$ being psh for any holomorphic $f:U\subset D\to X$ (see Lemma \ref{psh} below); therefore we see the change from $F_v$ to $G_v$ is again plurisubharmonicity to subharmonicity. Also notice that when $\dim D=1$ Theorem \ref{thm:YM approx} and the result in \cite{DW19} are the same because $F_v=G_v$ and $F^{k}_v=G^{k}_v$.

The lemma below is to justify the transition from $F_v$ to $G_v$.
\begin{lemma}\label{psh}
  Let $\Omega_1$ and $\Omega_2$ be open sets in $\mathbb{C}^m$ and $\mathbb{C}^n$ respectively. If $u(z,\xi)$ is an usc function on $\Omega_1 \times \Omega_2$ such that $u(z,s(z))$ is psh for any holomorphic map $s$ from an open subset of $\Omega_1$ to $\Omega_2$, then $u$ is psh on $\Omega_1 \times \Omega_2$. 
 \end{lemma}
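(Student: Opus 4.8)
The plan is to reduce the statement to the one-variable case and to the elementary fact that a separately-plurisubharmonic-type condition along all holomorphic graphs forces joint plurisubharmonicity. The first step is to recall that joint plurisubharmonicity of $u$ on $\Omega_1\times\Omega_2$ is equivalent to subharmonicity of $\lambda\mapsto u(a+\lambda b, c+\lambda d)$ for every complex line $\{(a+\lambda b, c+\lambda d):\lambda\in\CC\}$ meeting $\Omega_1\times\Omega_2$, together with upper semicontinuity (which is given). So it suffices to fix such a line and show the restriction is subharmonic in $\lambda$. If the ``$\Omega_1$-direction'' $b$ of the line is nonzero, then after an affine change of coordinates in $z$ we may take $b=e_1$, and the line is the graph of the affine (hence holomorphic) map $s(z_1)=c+ (z_1-a_1)d/b_1$ defined on a disk in $\Omega_1$ (restricting $z_1$ to a one-dimensional slice), so subharmonicity along it is exactly the hypothesis. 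The only remaining case is $b=0$, i.e. lines of the form $z\equiv z_0$ fixed, $\xi=c+\lambda d$: here we must show $\lambda\mapsto u(z_0, c+\lambda d)$ is subharmonic, and a constant map $s\equiv c+\lambda d$ is \emph{not} what we want — we need genuine variation in $\xi$ with $z$ frozen.

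To handle that last case I would use a perturbation/limiting argument. For $\tau\neq 0$ small, consider the holomorphic map $s_\tau(z_1) = c + \tfrac{1}{\tau}(z_1-z_0^{(1)})\,d$ on a small disk around $z_0^{(1)}$ in the first coordinate of $\Omega_1$ (all other $z$-coordinates frozen at $z_0$); by hypothesis $z_1\mapsto u(z_1,\ldots,z_0^{(m)}, s_\tau(z_1))$ is subharmonic. Reparametrizing $z_1 = z_0^{(1)}+\tau\lambda$ shows $\lambda\mapsto u(z_0^{(1)}+\tau\lambda, z_0^{(2)},\dots, c+\lambda d)$ is subharmonic on a disk in $\lambda$ of radius $\sim \mathrm{const}/|\tau|$, which can be taken to be any fixed disk once $\tau$ is small. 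As $\tau\to 0$ these functions converge pointwise to $\lambda\mapsto u(z_0, c+\lambda d)$; since they are uniformly bounded above on compacta (by usc) and subharmonic, the upper semicontinuous regularization of the $\limsup$ is subharmonic, and one checks it agrees with $u(z_0,c+\lambda d)$ using upper semicontinuity of $u$ and the sub-mean-value inequality (this is a standard Hartogs-type lemma for subharmonic functions). Hence $\lambda\mapsto u(z_0,c+\lambda d)$ is subharmonic.

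Putting the two cases together: $u$ restricted to every complex line is subharmonic and $u$ is usc, so by the standard characterization $u\in\PSH(\Omega_1\times\Omega_2)$. I expect the main obstacle to be the degenerate case $b=0$ — making the limiting argument rigorous requires a little care with the Hartogs lemma for subharmonic functions and with the fact that the convergence is only pointwise, not locally uniform; upper semicontinuity of $u$ is exactly what rescues the argument. Everything else (the reduction to lines, the affine-graph trick, the change of coordinates) is routine. An alternative to the perturbation argument, which would make the write-up cleaner, is to invoke a known separate-plurisubharmonicity result: $u$ is plurisubharmonic in $z$ for each fixed $\xi$ (take $s$ constant) and, by the $b\ne 0$ analysis, subharmonic along every complex line not parallel to $\{z=\mathrm{const}\}$; one can then appeal to a removable-singularity/continuity-of-plurisubharmonic-envelope statement, but the direct perturbation proof above is self-contained and is the route I would take.
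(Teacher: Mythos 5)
Your proposal follows essentially the same route as the paper: reduce to restrictions to complex lines, handle lines with nonzero $z$-component by realizing them as holomorphic graphs over $\Omega_1$ (the paper composes the first-coordinate projection with the first-coordinate injection; your affine map $s(z)=c+(z_1-a_1)d/b_1$ is the same device), and handle the vertical lines $z\equiv z_0$ by tilting them slightly and passing to the limit. One caveat on that last step: the functions $v_\tau(\lambda)=u(z_0^{(1)}+\tau\lambda,z_0^{(2)},\dots,c+\lambda d)$ do \emph{not} converge pointwise to $v(\lambda)=u(z_0,c+\lambda d)$ --- upper semicontinuity only gives $\limsup_{\tau\to0}v_\tau\le v$ --- so the usc regularization of the $\limsup$ is merely a subharmonic function $\le v$ (with equality a.e.), which by itself does not yield subharmonicity of $v$; the phrase ``one checks it agrees with $u(z_0,c+\lambda d)$'' hides the real issue. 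The clean way to close the gap, which is what the paper does, is to observe that at the center $\lambda=0$ one has $v_\tau(0)=v(0)$ \emph{exactly} for every $\tau$, write the sub-mean-value inequality for $v_\tau$ on circles centered at $0$, and apply reverse Fatou together with upper semicontinuity to the circle averages; since the base point $c$ and direction $d$ are arbitrary, this gives the sub-mean-value inequality for $v$ everywhere, with no need for Brelot--Cartan regularization. With that adjustment your argument is complete and coincides with the paper's.
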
  
\begin{proof}
 We want to show that $u$ is subharmonic on any complex line in $\Omega_1 \times \Omega_2$, and it suffices to consider the line $\mathbb{C}\ni \lambda \mapsto (\lambda z_0,\lambda \xi_0)$ where $(z_0,\xi_0)\in \Omega_1 \times \Omega_2$. In the case when $z_0$ and $\xi_0$ are both nonzero, we may assume $z_0=(1,0,...,0)$ and $\xi_0=(1,0,...,0)$. Let $G:\Omega_1 \to \mathbb{C}$ be the projection on first coordinate, and $F:\mathbb{C}\to \Omega_2$ be the injection to the first coordinate. By assumption, $u(z,F\circ G(z))$ is psh, so $\lambda \mapsto u(\lambda z_0, F\circ G(\lambda z_0))=u(\lambda z_0, \lambda \xi_0)$ is subharmonic.

If $\xi_0=0$, then $\lambda \mapsto u(\lambda z_0,0)$ is of course subharmonic. The final case is $z_0=0$ and $\xi_0=(1,0,...,0)$, and we need to show $\lambda \mapsto u(0,(\lambda,0,...,0))$ is subharmonic. Given $\varepsilon>0$ and $a\in\mathbb{C}$, the function $z\mapsto u(z, (z_1/\varepsilon+a,0,...,0))$ is psh, so $\lambda \mapsto u((\lambda-a) \varepsilon,0,...,0,(\lambda,0,...0))$ is subharmonic. Hence, $$u(0,(a,0,...,0))\leq \fint_{\partial B(a,r)}u((\lambda-a)\varepsilon,0,...,0,(\lambda,0,...,0))d \lambda,$$ for $r>0$. By Fatou's lemma and the fact $u$ is usc, $$\limsup_{\varepsilon\to 0}\int_{\partial B(a,r)}u((\lambda-a)\varepsilon,0,...,0,(\lambda,0,...,0))d \lambda \leq \int_{\partial B(a,r)}u(0,0,...,0,(\lambda,0,...,0))d \lambda.$$ As a result, $$u(0,(a,0,...,0))\leq \fint_{\partial B(a,r)}u(0,(\lambda,0,...,0))d \lambda.$$
\end{proof}

%\bibliographystyle{amsalpha}
%\bibliography{ref.bib}

\textsc{Purdue University}

\texttt{\textbf{wu739@purdue.edu}}

\end{document}